\newcommand{\norm}[1]{\left\Vert#1\right\Vert}
\newcommand{\R}{\mathbb{R}}
\newcommand{\lie}[1]{\mathfrak{#1}}     
\newcommand{\hook}{\lrcorner\,}
\newcommand{\SU}{\mathrm{SU}}
\newcommand{\SO}{\mathrm{SO}}
\newcommand{\dfn}[1]{\emph{#1}}
\theoremstyle{plain}
\newtheorem{proposition}{Proposition}[section]
\newtheorem{theorem}[proposition]{Theorem}
\newtheorem{lemma}[proposition]{Lemma}
\theoremstyle{definition}
\newtheorem{definition}[proposition]{Definition}
\theoremstyle{remark}
\newcommand{\Span}[1]{\operatorname{Span}\left\{#1\right\}}
\title{On seven dimensional quaternionic contact solvable Lie groups}
\author{Diego Conti, Marisa Fern\'andez and Jos\'e A. Santisteban}
\begin{document}

\maketitle

\begin{abstract}
We answer in the affirmative a question posed by Ivanov
and Vassilev \cite{IV1} on the existence of  a seven dimensional
quaternionic contact manifold
with closed fundamental $4$-form and non-vanishing
torsion endomorphism.
Moreover, we
show an approach to the classification of
seven dimensional solvable Lie groups having  an integrable left invariant
quaternionic contact
structure. In particular, we prove that the
unique seven dimensional nilpotent Lie group admitting such a structure
is the quaternionic Heisenberg group.
\end{abstract}

\vskip15pt{\small\textbf{2010 MSC classification}: 53C26; 53C25}\vskip5pt

\section{Introduction}

The notion of  quaternionic contact (qc) structure was introduced by Biquard in \cite{Biq1}, and it is
the natural geometrical structure that appears on the $(4n+3)$-sphere as the conformal 
infinity of the quaternionic hyperbolic space.
Such structures have been considered in connection with
the quaternionic contact Yamabe problem \cite{IMV1,IMV,IV2,Wei}.
Results about the CR-structure on the twistor space of a qc manifold
were given in \cite{Alt,Biq2,D,DIM}.

In general, a  qc structure on a differentiable manifold of dimension (4n+3)
is a distribution $H$ of codimension $3$ on $M$, called the horizontal space,
such that there exists a metric $g$ on $H$ and
a triplet $(\eta_1,\eta_2,\eta_3)$ of locally defined differential $1$-forms 
vanishing on $H$ and
such that the restrictions $d\eta_r|_H$ to $H$ of the $2$-forms $d\eta_r$ ($1\leq r\leq 3$)
are the local K\"ahler 2-forms
of an almost quaternion Hermitian structure on $H$.

The triplet of 1-forms $(\eta_1,\eta_2,\eta_3)$ is
determined up to a conformal factor and the action of $SO(3)$ on
$\mathbb{R}^3$. Therefore, $H$ is equipped with a conformal class
$[g]$ of Riemannian metrics and a rank $3$ bundle (the quaternionic bundle) $\mathbb{Q}$ of endomorphisms of $H$ such that $\mathbb{Q}$ is locally generated
by almost complex structures $(I_1, I_2, I_3)$ satisfying the quaternion relations.
The 2-sphere
bundle of one forms determines uniquely the associated metric and
a conformal change of the metric is equivalent to a conformal
change of the one forms.

Biquard in  \cite{Biq1} shows that
if $M$ is a qc manifold of dimension greater than $7$,
to every metric in the fixed conformal class $[g]$, one can associate
a  unique complementary distribution $V$  of $H$ 
in the
tangent bundle $TM$ such that $V$ is
locally generated by vector fields $\xi_1,\xi_2,\xi_3$
satisfying certain relations (see (\ref{bi1}) in section 2).
Using these vector fields $\xi_r$, we extend the metric $g$ on $H$
to a metric on $M$ by requiring
$\Span{\xi_1,\xi_2,\xi_3}=V\perp H$  and
$g(\xi_r,\xi_k)=\delta_{rk}$.
Moreover, in  \cite{Biq1} it is also proved that
$M$ has a canonical
 linear connection $\nabla$ preserving the qc structure and the splitting
$TM=H\oplus V$. This connection is
known as the Biquard connection.

However, if the dimension of $M$ is seven, there might be no vector fields
satisfying \eqref{bi1}.  Duchemin shows in \cite{D} that if
there are vector fields $\xi_r$ $(1\leq r\leq 3)$ satisfying the relations
\eqref{bi1} before mentioned, then the Biquard connection
is also defined on $M.$ In this case, the qc structure on the $7$-manifold
is said to be {\em integrable}.
In this paper, we assume the
integrability of the qc structure when we refer to a $7$-dimensional qc manifold.

If $M$ is a qc manifold with horizontal space $H,$
the restriction to $H$ of the  Ricci tensor of
$(g,\nabla)$ gives rise, on the one hand, to the qc-scalar curvature $S$ and,
on the other hand, to two
symmetric trace-free (0,2) tensor fields $T^0$ and $U$ defined on the distribution $H$
(see also Section $4$). The
tensors $T^0$ and $U$ determine the trace-free part of the Ricci tensor restricted to $H$ and can
also be expressed in terms of the torsion endomorphisms of the Biquard connection \cite{IMV}.
Moreover, the vanishing of the torsion endomorphisms of
the Biquard connection is equivalent to $T^0=U=0$ and if the dimension is at least eleven, then the
function $S$ has to be constant.
For any $7$-dimensional qc manifold, in \cite{Biq1,Biq2} it is proved that $U=0$ and
in \cite{IMV,IV1} it is shown that $S$ is constant if
the torsion endomorphism vanishes and the distribution $V$ is integrable (that is, $[V,V] \subset V$).

Associated to the $Sp(n)Sp(1)$
structure on the distribution $H$ of a qc structure,
one has the fundamental 4-form $\Omega$ defined (globally) on
$H$ by
\begin{equation}\label{fform}
\Omega=\omega_1\wedge\omega_1+\omega_2\wedge\omega_2+\omega_3\wedge\omega_3,
\end{equation}
where $\omega_r$, $1\leq r\leq 3$,  are the local K\"ahler $2$-forms
of the almost quaternion structure on $H$.
In \cite{AFISV} it is proved that for a seven dimensional manifold
with an integrable qc structure, the vertical space $V$
is integrable
if and only if the fundamental $4$-form $\Omega$
is closed.
Ivanov and Vassilev in \cite{IV1} prove
 that when the dimension of the manifold is greater than seven, the
4-form form $\Omega$ is closed if and only if the torsion endomorphism of the Biquard
connection vanishes. They raise the question
of the existence of a seven dimensional qc manifold with a closed
fundamental four form and a non-vanishing torsion endomorphism.
In this article, we answer the question in the affirmative
by proving the following.

\begin{theorem} \label{main-theorem}
There are seven dimensional manifolds with an integrable
qc structure
such that the fundamental four form $\Omega$ is closed but the torsion endomorphism
does not vanish.
\end{theorem}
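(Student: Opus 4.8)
The plan is to produce an explicit example in the category of left invariant structures on a seven dimensional solvable Lie group, which reduces the whole problem to linear algebra on a Lie algebra. I would start by fixing a coframe $e^1,\dots,e^7$ on a seven dimensional Lie algebra $\lie g$, declaring $H=\Span{e^1,\dots,e^4}$ and $V=\Span{e^5,e^6,e^7}$, and setting the vertical one-forms $\eta_r=e^{r+4}$ together with the candidate K\"ahler forms $\omega_1=e^{12}+e^{34}$, $\omega_2=e^{13}+e^{42}$, $\omega_3=e^{14}+e^{23}$ (so that $I_1,I_2,I_3$ satisfy the quaternion relations and $g=\sum(e^i)^2$ on $H$). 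I would then write the most general structure equations $de^i=\sum c^i_{jk}e^{jk}$ subject to: (i) $d\eta_r|_H=2\omega_r$ so that the qc condition holds; (ii) the Jacobi identity $d^2=0$; and (iii) the existence of vertical vector fields $\xi_1,\xi_2,\xi_3$ dual to $\eta_1,\eta_2,\eta_3$ satisfying Biquard's relations \eqref{bi1}, which makes the qc structure integrable and forces the Biquard connection to exist. Among the free parameters I would look for the smallest perturbation of the quaternionic Heisenberg algebra (where all $de^i$ with $i\le4$ vanish and $d\eta_r=2\omega_r$) that keeps $\Omega=\omega_1^2+\omega_2^2+\omega_3^2$ closed but switches on the torsion.

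Concretely, the next step is to impose $d\Omega=0$. Since $\Omega$ is a globally defined left invariant $4$-form, $d\Omega=0$ is a finite set of linear-in-$c$ equations once the quadratic Jacobi constraints are taken into account; by the result of \cite{AFISV} quoted above this is equivalent to integrability of $V$, i.e. $[V,V]\subset V$, which in coframe terms says $de^5,de^6,de^7$ have no component along $e^{ij}$ with both $i,j\le 4$ beyond what is dictated by $d\eta_r|_H=2\omega_r$ — more precisely that the "horizontal part" of $d\eta_r$ is exactly $2\omega_r$ with the remaining terms of type $H\wedge V$ or $V\wedge V$. I would solve these equations to cut out the family of admissible solvable qc algebras, and then compute the torsion endomorphism. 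The cleanest route to the torsion is via the tensors $T^0$ and $U$ and the qc-Ricci identities recalled in Section~4: by Biquard \cite{Biq1,Biq2} one always has $U=0$ in dimension seven, so the torsion endomorphism vanishes precisely when $T^0=0$; hence it suffices to exhibit one algebra in the family with $T^0\neq 0$. Equivalently — and this is probably the computationally lightest check — I can compute the Biquard connection $\nabla$ directly from the structure equations (it is the unique connection preserving $H\oplus V$, the metric and $\mathbb Q$, with the prescribed torsion normalization), read off its torsion endomorphism on $H$, and verify it is nonzero.

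The main obstacle I anticipate is the bookkeeping in identifying and normalizing the Biquard connection in the seven dimensional integrable case: unlike dimensions $\ge 11$, here one cannot take Biquard's intrinsic characterization for granted, and one must either invoke Duchemin's existence result \cite{D} after checking \eqref{bi1}, or verify by hand that the connection determined by the usual normalization conditions is compatible with the qc data. A secondary difficulty is ensuring the example is genuinely a Lie algebra of a \emph{solvable} (indeed, I will aim for a low-dimensional non-nilpotent solvable) group and not secretly isomorphic to the quaternionic Heisenberg algebra: to see that the torsion cannot be killed by a different choice of admissible $(\eta_r,\omega_r)$ within the same algebra, I would note that the torsion endomorphism is a tensor invariantly attached to the qc structure, so once $T^0\neq0$ in one adapted coframe it is nonzero in all of them, and the $SO(3)\times\R^+$ gauge freedom in $(\eta_1,\eta_2,\eta_3)$ cannot change this. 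The remaining steps — verifying $d^2=0$, $d\Omega=0$, and $T^0\neq0$ on the specific algebra — are then finite explicit computations, which I would carry out on the simplest candidate (a rank-one solvable extension of a codimension-one ideal containing the quaternionic Heisenberg nilradical) and record as the proof of Theorem~\ref{main-theorem}.
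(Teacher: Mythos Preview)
Your proposal is correct and follows essentially the same route as the paper's Section~4: exhibit a left-invariant integrable qc structure on an explicit solvable seven-dimensional Lie group, check $d\Omega=0$ via integrability of $V$, and extract $T^0$ from the qc-Ricci forms using \eqref{sp1curv} and \eqref{sixtyfour} rather than computing the full Biquard connection. The paper does exactly this on the two algebras \eqref{solvable-examples} with $\mu=-1,-1/3$, finding $S=-\tfrac12,-\tfrac16$ and $T^0\neq0$ in each case.

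The one substantive difference is how the example is \emph{found}. You propose an ad hoc perturbation search around the quaternionic Heisenberg algebra; the paper instead runs a full classification (Sections~2--3) of solvable qc Lie algebras with a normal ascending flag, and the two non-Heisenberg algebras that survive become the examples. Your heuristic of a rank-one solvable extension with Heisenberg nilradical is in the right spirit but does not quite land on these algebras, so without the classification you would need some luck or further analysis to hit a working candidate. One small correction: in coframe terms $[V,V]\subset V$ says that $de^k$ has no component in $\Lambda^2\Span{e^5,e^6,e^7}$ for $k\le 4$; it is not a condition on $de^5,de^6,de^7$ as you wrote.
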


Examples of qc manifolds can be found in \cite{Biq1,Biq2,IMV,D1}.
The compact homogeneous model is the sphere $S^{4n+3}$, considered as
the boundary at infinity of quaternionic projective $(n+1)$-space, while the
non-compact homogeneous model is the quaternionic Heisenberg group
$G({\mathbb{H}}) ={\mathbb{H}}^n \times {Im}(\mathbb{H})$ endowed with its
natural qc structure; in
fact, $G({\mathbb{H}})$ is isomorphic as a qc manifold to $S^{4n+3}$ minus a point,
via the quaternionic Cayley transform.
Moreover, an
extensively studied class of examples of quaternionic contact
structures are provided by the  3-Sasakian manifolds. We recall that a
$(4n+3)$-dimensional  Riemannian manifold $(M,g)$ is
called 3-Sasakian if the cone
metric $g_c=t^2g+dt^2$ on $C=M\times \mathbb{R}^+$ is a
hyperk\"ahler metric, namely, it has holonomy contained
in $Sp(n+1)$ \cite{BGN}. For any 3-Sasakian manifold, it was shown in \cite{IMV} that
the torsion endomorphism vanishes, and
 the converse is true if in addition the qc scalar curvature (see
\eqref{qscs}) is a positive constant.
Explicit examples of seven dimensional qc manifolds with zero or
non-zero torsion endomorphism were recently given in \cite{AFISV}.
Nevertheless, the fundamental $4$-form is non-closed on the
seven dimensional qc manifold with non-zero torsion endomorphism
presented in  \cite{AFISV}.

To prove Theorem \ref{main-theorem}, we consider
solvable $7$-dimensional Lie algebras $\lie{g}$ with a {\em normal ascending flag}, that is,
the dual space  $\lie{g}^*$ has a flag
$$
V^0\subsetneq \dotsb\subsetneq V^7=\lie{g}^*,
$$
$V^i$ being an $i$-dimensional subspace of $\lie{g}$ such that
$dV^i\subset \Lambda^2V^i$, where $d$ is the Chevalley-Eilenberg
differential on $\lie{g}^*$. Firstly, in Section $2$, we prove that if $\lie{g}$ has an integrable qc structure,
then $e^1, e^2, e^3$ and $e^4$ are in $V^6$ for any $\{e^1,\cdots,e^7\}$ basis of $\lie{g}^*$
{\em adapted} to the qc structure and for any normal ascending flag of $\lie{g}$. 
In Section $3$, we show that the unique
solvable $7$-dimensional Lie algebras having a normal ascending flag and an integrable qc structure
are the Lie algebra of the quaternionic Heisenberg group and the two Lie algebras defined by
\begin{align*}
de^1&=0,     \qquad  de^2={(1+\mu)} e^{12}- \mu e^{15}+ \mu e^{34}- \mu e^{46},\\
de^3&=- {(1+\mu)} e^{13}- {(2+3 \mu)} e^{24}- \mu e^{16}+ \mu e^{45}, \quad de^4=2  \mu e^{14},\\
de^5&=e^{12}+e^{34} - e^{46},\\
de^6&=e^{13}+e^{42} +  e^{45},\\
de^7&=e^{14}+e^{23} + \mu e^{56},\
\end{align*}
where $\mu=-1,-\frac{1}{3}$. For convenience in the notation, in Section $4$, we 
shall
denote by 
$\lie{g}_1$ and 
$\lie{g}_2$
the Lie algebras defined by $\mu=-1$ and $\mu=-\frac{1}{3}$, respectively.
We consider the simply connected solvable
Lie group $G_s$ with Lie algebra $\lie{g}_s$ 
$(s=1,2)$, and define an integrable  
qc structure on 
each $G_s$. We prove (see Theorems $4.2$ and $4.3$) that the fundamental $4$-form on 
$G_s$  is closed but the torsion endomorphism of the Biquard connection is non-zero. Finally, we notice that
the qc structure on $G_s$
 is not locally qc conformal to the standard flat qc
structure on the quaternionic Heisenberg group $G({\mathbb{H}})$.

\smallskip
\noindent {\bf Acknowledgments.} This work has been partially supported through Project MICINN (Spain) MTM2008-06540-C02-01, and the UFI (UPV/EHU) 11/52.
\section{Solvable 7-dimensional Lie algebras with a normal ascending flag}
In this section, we consider solvable 7-dimensional Lie algebras with a {\em normal ascending flag}
and an {\em integrable} qc structure. For such a Lie algebra $\lie{g}$, we study
the behaviour of a coframe $\{e^1,\cdots,e^7\}$ {\em adapted} to the qc structure.
First, we need some definitions and results about qc manifolds and the Biquard connection.

Let $(M, g, \mathbb{Q})$ be a {\em qc manifold} of dimension $4n+3$, that is,
$M$ has a horizontal distribution $H$ of dimension $4n$ with a metric $g$ satisfying:
i) $H$ is locally determined by the kernel of three differential 1-forms $\eta_r$ $(1\leq r \leq 3)$
on $M$;
ii) $H$ has an $Sp(n)Sp(1)$ structure, that is, it is
equipped with a rank-three bundle
$\mathbb Q$ consisting of
endomorphisms of $H$ locally generated
by three almost complex structures $I_1,I_2,I_3$ on $H$ satisfying
the identities of the imaginary unit quaternions,
$I_1I_2=-I_2I_1=I_3, \quad I_1I_2I_3=-id_{|_H}$, which are
Hermitian with respect to the metric $g$, i.e. $g(I_r.,I_r.)=g(.,.)$; and
iii) the following
compatibility conditions hold:
$ 2g(I_rX,Y) = d\eta_r(X,Y)$, for $1\leq r\leq3$ and for any $X,Y\in H$.

Biquard in \cite{Biq1} proves that if $M$ is a qc manifold of dimension $(4n+3)>7$,
there exists a canonical connection on $M$. In the following theorem, we recall
the properties that distinguish that connection.
\begin{theorem}\label{biqcon}\cite{Biq1}
Let $(M, g,\mathbb{Q})$ be a qc
manifold of dimension $4n+3>7$.
Then there exists a unique connection
$\nabla$ with
torsion $T$ on $M^{4n+3}$ and a unique supplementary subspace $V$ to $H$ in
$TM$, such that:
\begin{enumerate}
\item[i)]
$\nabla$ preserves the decomposition $H\oplus V$ and the $Sp(n)Sp(1)$ structure on $H$,
i.e. $\nabla g=0,  \nabla\sigma \in\Gamma(\mathbb Q)$ for a section
$\sigma\in\Gamma(\mathbb Q)$;
\item[ii)] the torsion $T$ on $H$ is given by $T(X,Y)=-[X,Y]_{|V}$;
\item[iii)] for $\xi\in V$, the endomorphism $T(\xi,.)_{|H}$ of $H$ lies in
$(sp(n)\oplus sp(1))^{\bot}\subset gl(4n)$;
\item[iv)] the connection on $V$ is induced by the natural identification $\varphi$ of
$V$ with the subspace $sp(1)$ of the endomorphisms of $H$, i.e.
$\nabla\varphi=0$.
\end{enumerate}
\end{theorem}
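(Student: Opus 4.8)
The plan is to follow the classical scheme for a canonical connection adapted to a $G$-structure: first determine the vertical complement $V$ from the qc data alone, then construct $\nabla$ as a first-prolongation problem. The dimension hypothesis $4n+3>7$ (i.e.\ $n>1$) will enter exactly as the condition under which the relevant $Sp(n)Sp(1)$-equivariant maps are isomorphisms.

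First I would pin down $V$. Normalising the local $1$-forms so that $(\eta_r)_{|H}=0$, any admissible complement must be spanned by vector fields $\xi_1,\xi_2,\xi_3$ with $\eta_r(\xi_s)=\delta_{rs}$. Imposing that some connection preserving $H\oplus V$, the metric $g$ and $\mathbb Q$ admits torsion obeying iii) forces the horizontal $2$-forms $(\xi_r\hook d\eta_s)_{|H}$ to satisfy the reciprocity relations $(\xi_r\hook d\eta_s)_{|H}=-(\xi_s\hook d\eta_r)_{|H}$ together with a trace normalisation. I would show that, fibrewise, these are a linear system for the $\xi_r$ whose unique solvability is an $Sp(n)Sp(1)$-equivariant statement about $\Lambda^2H^*$, valid precisely when $n>1$. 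This fixes $V$ uniquely, and $g$ extends to $M$ by declaring $V\perp H$ and $g(\xi_r,\xi_s)=\delta_{rs}$.

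Next I would build $\nabla$. Fixing any reference connection $\nabla^0$ that preserves the splitting, $g$ and $\mathbb Q$, every admissible connection is $\nabla=\nabla^0+A$ with $A$ a $1$-form valued in $sp(n)\oplus sp(1)\subset gl(4n)$ acting on $H$, the action on $V$ being the $so(3)\cong sp(1)$ part dictated by iv). The torsion is then $T=T^0+\partial A$ for the algebraic antisymmetrisation map $\partial$. Conditions ii) and iii) prescribe exactly the horizontal part of $T$ and the $(sp(n)\oplus sp(1))$-part of the mixed torsion, so they become inhomogeneous linear equations for $A$ in those slots. Uniqueness reduces to injectivity of $\partial$ there (forcing $A$, hence $\nabla$, to be unique), and existence to the corresponding components of $T^0$ lying in its image; condition iv) then fixes the vertical block and metric compatibility determines the rest.

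The main obstacle is the representation theory that makes the hypothesis sharp. One must decompose $\Lambda^2H^*$ and $gl(4n)=\End(H)$ under $Sp(n)Sp(1)$ and check that the equivariant maps governing both the determination of the $\xi_r$ and the prolongation map $\partial$ have trivial kernel and cokernel for $n>1$. This is precisely where $4n+3>7$ is used: for $n=1$ the corresponding $Sp(1)Sp(1)$-modules carry extra summands, the system for the $\xi_r$ fails to be uniquely solvable, and a compatible $V$, and hence $\nabla$, need not exist --- the very phenomenon underlying the notion of integrability recalled in the Introduction. I would isolate these two linear-algebra lemmas and treat the remaining globalisation of the pointwise data into smooth tensors, together with the consistency of the separate blocks, as routine.
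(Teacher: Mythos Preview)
The paper does not prove this theorem: it is quoted from Biquard's work \cite{Biq1} as background, and no argument is supplied beyond the statement and the remark following it about the inner product on $\End(H)$. There is therefore nothing in the paper to compare your proposal against.

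That said, your outline is the right shape for Biquard's original argument: first solve for the Reeb fields $\xi_r$ via the normalisation and reciprocity conditions \eqref{bi1}, which is exactly where $n>1$ enters through the $Sp(n)Sp(1)$-decomposition of $\Lambda^2H^*$; then run a prolongation argument to pin down $\nabla$ by prescribing the torsion components as in ii)--iv). Your identification of the representation-theoretic step as the crux, and your remark that for $n=1$ the extra summands spoil unique solvability (whence Duchemin's integrability hypothesis), are both accurate. If you intend to flesh this out, the places that need genuine work are the explicit $Sp(n)Sp(1)$-module decompositions showing that the relevant equivariant maps are isomorphisms for $n\geq 2$, and the verification that conditions i)--iv) are mutually consistent; the rest is, as you say, routine.
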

In the part iii), the inner product $\langle,\rangle$ of $End(H)$ is given by $\langle A,B\rangle = {
\sum_{i=1}^{4n} g(A(e_i),B(e_i)),}$ for $A, B \in End(H)$.

We shall call the above connection {\em the Biquard connection}.
Biquard \cite{Biq1} also described the supplementary subspace $V$, namely,
$V$ is (locally) generated by vector fields $\{\xi_1,\xi_2,\xi_3\}$,
such that
\begin{equation}  \label{bi1}
\begin{aligned} \eta_s(\xi_k)=\delta_{sk}, \qquad (\xi_s\lrcorner
d\eta_s)_{|H}=0,\\ (\xi_s\lrcorner d\eta_k)_{|H}=-(\xi_k\lrcorner
d\eta_s)_{|H}, \end{aligned}
\end{equation}
where $\lrcorner$ denotes the interior multiplication.

If the dimension of $M$ is seven, there might be no vector fields
satisfying \eqref{bi1}. Duchemin shows in \cite{D} that if we
assume, in addition, the existence of vector fields $\{\xi_1,\xi_2,\xi_3\}$ as in
\eqref{bi1}, then an analogue of Theorem~\ref{biqcon}
holds. In this case, the qc structure on the $7$-manifold is called {\em integrable}.

From now on, given a $7$-dimensional Lie algebra $\lie g$ whose dual
space is spanned by $\{ e^1,\ldots ,e^7\}$, we will write $e^{ij}= e^i\wedge e^j$,
$e^{ijk}= e^i\wedge e^j\wedge e^k$, and so forth. Moreover,
let us fix some language.
On a Lie algebra $\lie{g}$, an {\em integrable qc structure} can be characterized by the existence of a coframe $e^1,\dotsc, e^7$ with
\begin{equation} \label{eqn:qc}
\begin{aligned}
de^5&=e^{12}+e^{34} + f_2\wedge e^7- f_3\wedge e^6 \mod \Span{e^{56},e^{57},e^{67}},\\
de^6&=e^{13}+e^{42} +  f_3\wedge e^5- f_1\wedge e^7\mod \Span{e^{56},e^{57},e^{67}},\\
de^7&=e^{14}+e^{23} +  f_1\wedge e^6- f_2\wedge e^5\mod \Span{e^{56},e^{57},e^{67}},
\end{aligned}
\end{equation}
where the $ f_i$ are in $\Span{e^1,e^2,e^3,e^4}$. This condition is invariant under the action of $\R^*\times\SO(4)$, where the relevant representation of $\SO(4)$ is
\[\R^7=\R^4\oplus\Lambda^2_+(\R^4),\]
and $\lambda\in\R^*$ acts as \[\operatorname{diag}(\lambda,\lambda,\lambda,\lambda,\lambda^2,\lambda^2,\lambda^2).\]
\begin{definition}
Let $\lie{g}$ be a $7$-dimensional Lie algebra with an integrable qc structure, and
let $\{e^1,\dotsc, e^7\}$ be a basis of the dual space $\lie{g}^*$. We say that
$\{e^1,\dotsc, e^7\}$ is an \em
 adapted coframe to the qc structure on $\lie{g}$
if with respect to that basis, $\lie{g}$ is defined by equations as \eqref{eqn:qc}.
\end{definition}

Furthermore, we consider solvable $7$-dimensional Lie algebras $\lie{g}$ that admit a \dfn{normal ascending flag}. This means that there is a flag
\[\lie{g}_0\subsetneq \dotsb\subsetneq \lie{g}_7=\lie{g},\]
where $\lie{g}_k$ is a $k$-dimensional ideal of $\lie{g}$. In particular this implies that $\lie{g}$ is solvable  since $[\lie{g}_i,\lie{g}_i]\subset \lie{g}_{i-1}$; moreover, taking annihilators, we obtain a dual flag
\begin{equation}
\label{eqn:normalflag}
V^0\subsetneq \dotsb\subsetneq V^7=\lie{g}^*, \quad dV^i\subset \Lambda^2V^i.
\end{equation}
The following straightforward result will be useful in the sequel.
\begin{lemma}
\label{lemma:solvable}
Let $\lie{g}$ be a Lie algebra with a normal ascending flag, and let $\alpha$ be an element of $\lie{g}^*$.
\begin{itemize}
\item If $(d\alpha)^k\neq0$, then $\alpha\notin V^i$, $i<2k$. If in addition
\[\alpha\in V^{2k}, \quad (d\alpha)^k=\eta^1\wedge\dotsb\wedge\eta^{2k},\]
then $V^{2k}=\Span{\eta^1,\dotsb,\eta^{2k}}$.
\item If $\alpha\wedge(d\alpha)^k\neq0$, then $\alpha\notin V^i$, $i<2k+1$. If in addition
\[\alpha\in V^{2k+1},\quad \alpha\wedge(d\alpha)^k=\eta^1\wedge\dotsb\wedge\eta^{2k+1},\]
then $V^{2k}=\Span{\eta^1,\dotsb,\eta^{2k+1}}$.
\end{itemize}
\end{lemma}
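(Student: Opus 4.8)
The plan is to extract from the hypotheses exactly what is forced on $V^{2k}$ (resp.\ $V^{2k+1}$) by the nondegeneracy of a power of $d\alpha$, together with the defining property $dV^i\subset\Lambda^2 V^i$ of a normal ascending flag. The key observation is that $\Lambda^{\bullet}V^i$ is a differential subalgebra of $\Lambda^{\bullet}\lie{g}^*$: since $d V^i\subset\Lambda^2 V^i$, the ideal generated by $V^i$ is $d$-closed, and in particular $d$ restricts to a derivation of the exterior algebra on $V^i$. Consequently, if $\alpha\in V^i$ then $d\alpha\in\Lambda^2 V^i$, hence $(d\alpha)^k\in\Lambda^{2k}V^i$ and $\alpha\wedge(d\alpha)^k\in\Lambda^{2k+1}V^i$. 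If $\dim V^i<2k$ (resp.\ $<2k+1$) these ambient spaces are zero, which proves the contrapositive of the first assertion in each bullet: $(d\alpha)^k\neq0$ forces $\alpha\notin V^i$ for $i<2k$, and $\alpha\wedge(d\alpha)^k\neq0$ forces $\alpha\notin V^i$ for $i<2k+1$.

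For the second assertion of the first bullet, suppose $\alpha\in V^{2k}$ and $(d\alpha)^k=\eta^1\wedge\dotsb\wedge\eta^{2k}$ is a nonzero decomposable $2k$-form. By the previous paragraph $(d\alpha)^k\in\Lambda^{2k}V^{2k}$, and since $\dim V^{2k}=2k$ this space is one-dimensional, spanned by $(d\alpha)^k$ itself. A standard fact about decomposable forms is that the annihilator of a nonzero decomposable $2k$-form $\eta^1\wedge\dotsb\wedge\eta^{2k}$ inside the ambient $2k$-dimensional space is precisely $\Span{\eta^1,\dotsc,\eta^{2k}}$: each $\eta^j$ wedges to zero with it, and any $\beta\notin\Span{\eta^1,\dotsc,\eta^{2k}}$ completes to a basis and hence $\beta\wedge\eta^1\wedge\dotsb\wedge\eta^{2k}\neq0$. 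Applying this with the ambient space $V^{2k}$, and noting that every element of $V^{2k}$ wedged with $(d\alpha)^k$ lands in $\Lambda^{2k+1}V^{2k}=0$, we conclude $V^{2k}\subseteq\Span{\eta^1,\dotsc,\eta^{2k}}$; equality follows by comparing dimensions, since the $\eta^j$ are linearly independent (their wedge is nonzero) and there are $2k$ of them.

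The second bullet is entirely parallel: with $\alpha\in V^{2k+1}$ and $\alpha\wedge(d\alpha)^k=\eta^1\wedge\dotsb\wedge\eta^{2k+1}$ nonzero, this form spans the one-dimensional top exterior power $\Lambda^{2k+1}V^{2k+1}$, and the same annihilator argument inside $V^{2k+1}$ gives $V^{2k+1}=\Span{\eta^1,\dotsc,\eta^{2k+1}}$ (I read the displayed ``$V^{2k}$'' in the statement as a typo for $V^{2k+1}$). I do not anticipate a genuine obstacle here; the only point requiring a little care is the clean statement and proof of the linear-algebra lemma on annihilators of decomposable forms, and the verification that $dV^i\subset\Lambda^2V^i$ really does imply $\Lambda^{\bullet}V^i$ is $d$-stable — both of which are routine. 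The rest is bookkeeping with dimensions.
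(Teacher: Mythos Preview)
Your proof is correct and is exactly the argument the authors have in mind: the paper does not actually prove this lemma but introduces it as a ``straightforward result,'' and your annihilator-of-a-decomposable-form argument is the natural way to fill it in. Your reading of the displayed $V^{2k}$ in the second bullet as a typo for $V^{2k+1}$ is also correct.
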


\begin{lemma}
\label{lemma:structure}
Let $\lie{g}$ be a $7$-dimensional Lie algebra with a normal ascending flag and an integrable qc structure.
Fix an adapted coframe $\{ e^1,\ldots ,e^7\}$ to the qc structure and a flag as in \eqref{eqn:normalflag}. Then,
\begin{align*}
\dim\Span{e^5,e^6,e^7}\cap V^4&=0,&\\
   \dim\Span{e^5,e^6,e^7}\cap V^5&=1, \\
   \dim\Span{e^5,e^6,e^7}\cap V^6&=2.
\end{align*}
\end{lemma}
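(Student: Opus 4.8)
The plan is to exploit Lemma~\ref{lemma:solvable} applied to the forms $e^5,e^6,e^7$ and their linear combinations, using the structure equations \eqref{eqn:qc}. The key observation is that for $\alpha=a_5e^5+a_6e^6+a_7e^7$ with $(a_5,a_6,a_7)\neq0$, the equations \eqref{eqn:qc} give $d\alpha\equiv a_5(e^{12}+e^{34})+a_6(e^{13}+e^{42})+a_7(e^{14}+e^{23})$ modulo terms in $\Span{e^1,\dots,e^4}\wedge\Span{e^5,e^6,e^7}$ and $\Span{e^{56},e^{57},e^{67}}$. Since the $2$-form $a_5(e^{12}+e^{34})+a_6(e^{13}+e^{42})+a_7(e^{14}+e^{23})$ is, up to scale, the K\"ahler form of a complex structure on $\Span{e^1,e^2,e^3,e^4}$, its square is $2(a_5^2+a_6^2+a_7^2)\,e^{1234}\neq0$. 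Hence $(d\alpha)^2\neq0$ for every nonzero $\alpha\in\Span{e^5,e^6,e^7}$: indeed the extra terms, when one expands $(d\alpha)^2$, either vanish for degree reasons or produce a nonzero $e^{1234}$ coefficient which cannot cancel the positive term $2(a_5^2+a_6^2+a_7^2)e^{1234}$ — one should check this cancellation claim carefully, isolating the $e^{1234}$ component.

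Granting $(d\alpha)^2\neq0$ for all $0\neq\alpha\in\Span{e^5,e^6,e^7}$, Lemma~\ref{lemma:solvable} immediately gives $\alpha\notin V^i$ for $i<4$, i.e. $\Span{e^5,e^6,e^7}\cap V^3=0$; but we need the sharper statement $\Span{e^5,e^6,e^7}\cap V^4=0$. For this I would argue that in fact $\alpha\wedge(d\alpha)^2\neq0$ as well: one has $\alpha\wedge(d\alpha)^2 = \alpha\wedge\big(2(a_5^2+a_6^2+a_7^2)e^{1234}+\dots\big)$, and the leading term $2(a_5^2+a_6^2+a_7^2)\,\alpha\wedge e^{1234}$ is nonzero, while any correction term is a multiple of $e^{1234}$ wedged with something in $\Span{e^5,e^6,e^7}$ plus terms of too low degree in $e^1,\dots,e^4$; careful bookkeeping of the $e^{1234}\wedge e^j$ components ($j=5,6,7$) shows it does not vanish. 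Then by the second part of Lemma~\ref{lemma:solvable}, $\alpha\notin V^i$ for $i<5$, which forces $\Span{e^5,e^6,e^7}\cap V^4=0$, the first claimed equality.

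For the remaining two equalities I would use the flag property $\dim V^{i+1}=\dim V^i+1$ together with a dimension count: since $V^i\subsetneq\lie{g}^*$ has codimension $7-i$ and $\Span{e^5,e^6,e^7}$ is $3$-dimensional, $\dim(\Span{e^5,e^6,e^7}\cap V^i)\geq 3-(7-i)=i-4$, so $\dim(\Span{e^5,e^6,e^7}\cap V^5)\geq1$ and $\dim(\Span{e^5,e^6,e^7}\cap V^6)\geq2$. Combined with monotonicity in $i$ and the fact that the intersection dimension can increase by at most $1$ when passing from $V^i$ to $V^{i+1}$, starting from $\dim(\Span{e^5,e^6,e^7}\cap V^4)=0$ we get exactly $1$ at level $5$ and exactly $2$ at level $6$. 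The main obstacle is the first equality $\Span{e^5,e^6,e^7}\cap V^4=0$: it requires showing the nonvanishing of $\alpha\wedge(d\alpha)^2$ for an \emph{arbitrary} nonzero combination $\alpha\in\Span{e^5,e^6,e^7}$, which means tracking the contributions of the $\mod\Span{e^{56},e^{57},e^{67}}$ ambiguity and of the $f_i\wedge e^j$ terms in \eqref{eqn:qc} to the $\Lambda^5$-component; the quaternionic relations among the forms $e^{12}+e^{34}$, $e^{13}+e^{42}$, $e^{14}+e^{23}$ (they are a basis of $\Lambda^2_+(\R^4)$, each squaring to $2e^{1234}$ and pairwise wedging to $0$) are exactly what makes the leading term survive.
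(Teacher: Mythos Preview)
Your argument is correct, and your dimension-counting for the last two equalities is exactly what the paper does. The one genuine difference is in the first equality: the paper avoids your ``careful bookkeeping'' by invoking the symmetry group. It observes that $\SU(2)_+\subset\SO(4)$ acts on $\Span{e^5,e^6,e^7}$ as $\SO(3)$ on $\R^3$, so any nonzero element of $\Span{e^5,e^6,e^7}\cap V^4$ can be rotated into $e^5$ itself, and then the single check $e^5\wedge(de^5)^2\neq0$ suffices. Your route --- computing for a general combination $\alpha$ --- works too, and the bigrading $\Lambda^{p,q}$ you implicitly use makes it rigorous: the $\Lambda^{4,1}$-component of $\alpha\wedge(d\alpha)^2$ can only come from $\alpha\wedge\omega^2$ with $\omega=a_5(e^{12}+e^{34})+a_6(e^{13}+e^{42})+a_7(e^{14}+e^{23})$, since every other term in $d\alpha$ lies in $\Lambda^{1,1}\oplus\Lambda^{0,2}$; so there is in fact no cancellation to worry about. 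The paper's symmetry trick is shorter here and, more importantly, sets up a pattern that is reused throughout the proof of Lemma~\ref{lemma:e4} and the classification in Section~\ref{classification}, where one repeatedly rotates by $\SO(4)$ to put elements into normal form. Your direct computation buys independence from that group-theoretic observation, at the cost of a small amount of extra algebra.
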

\begin{proof}

Observe that $\SU(2)_+\subset\SO(4)$ acts on $\Span{e^5,e^6,e^7}$ as $\SO(3)$ acts on $\R^3$.
Hence, if we had a nonzero element of $\Span{e^5,e^6,e^7}\cap V^4$, we could assume it is $e^5$. Since $e^5\wedge(de^5)^2$ is non-zero, this contradicts Lemma~\ref{lemma:solvable}.

It follows that  \[\dim\Span{e^5,e^6,e^7}\cap V^5\leq1,\]
because the intersection with $V^4$ is trivial, and $V^4$ has codimension one in $V^5$. On the other hand, equality must hold, because $V^5$ has codimension two in $V^7$.

The last equality is proved in the same way.
\end{proof}

\begin{lemma}
\label{lemma:e4}
Let $\lie{g}$ be a $7$-dimensional Lie algebra with a normal ascending flag and an integrable qc structure.
Fix an adapted coframe $e^1,\dotsc, e^7$ and a flag as in \eqref{eqn:normalflag}. Then $e^1$, $e^2$, $e^3$ and $e^4$ are in $V^6$.
\end{lemma}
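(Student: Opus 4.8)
The plan is to exploit the $\R^*\times\SO(4)$-invariance of \eqref{eqn:qc} to normalise the coframe, then to combine the flag \eqref{eqn:normalflag} with the Jacobi identity $d^2=0$ to kill the remaining parameters.

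\emph{Reduction.} Since $\SU(2)_+\subset\SO(4)$ acts on $\Span{e^5,e^6,e^7}$ as $\SO(3)$ on $\R^3$, and $\Span{e^5,e^6,e^7}\cap V^6$ is a plane by Lemma~\ref{lemma:structure}, I may assume $e^6,e^7\in V^6$ and $e^5\notin V^6$; with the residual circle (rotations fixing that plane, hence fixing $e^5$ up to sign) I may also arrange $e^7\in V^5$, which is a line in that plane by Lemma~\ref{lemma:structure}. Now $V^6$ is the annihilator of the one-dimensional ideal $\lie{g}_1=\R Z$, and $e^6(Z)=e^7(Z)=0$ while $e^5(Z)\neq0$; rescaling $Z$ and acting by $\SU(2)_-\subset\SO(4)$, which fixes $e^5,e^6,e^7$ and acts transitively on spheres in $\Span{e^1,\dots,e^4}$, I may assume $Z=a\,e_1+e_5$ with $a\geq0$, for $e_1,\dots,e_7$ the dual basis. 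Because $\Span{e^1,\dots,e^4}$ is preserved by $\R^*\times\SO(4)$, the assertion is invariant under the group; and now $e^2,e^3,e^4\in V^6$ automatically, whereas $e^1\in V^6$ iff $a=0$. So it suffices to prove $a=0$.

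\emph{Linear consequences of the flag.} The condition $dV^6\subset\Lambda^2V^6$ is equivalent to $\iota_Z(d\alpha)=0$ for all $\alpha\in V^6$; applying this with $\alpha=e^6$ and $\alpha=e^7$ and matching against \eqref{eqn:qc} forces $f_3=a\,e^3$, $f_2=-a\,e^4$, and determines the coefficients of $e^{56}$ and $e^{57}$ in $de^6$ and $de^7$. Writing $\lie{g}_2=\R Z+\R Y$ for the two-dimensional ideal, so that $e^7\in V^5=(\lie{g}_2)^{\circ}$, the extra condition $\iota_Y(de^7)=0$ expresses $f_1$ through the components of $Y$ and kills the $e^{67}$-coefficient of $de^7$. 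After this $de^5,de^6,de^7$ are known up to $a$, the components of $Y$ and a few vertical coefficients, while $de^1,\dots,de^4$ are constrained only by $\iota_Z(de^i)=0$ ($i=2,3,4$) and by the lower flag conditions $dV^4\subset\Lambda^2V^4$, $dV^5\subset\Lambda^2V^5$.

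\emph{The Jacobi identity.} This is the core of the proof and, I expect, the main obstacle. Expanding $d(de^5)=d(de^6)=d(de^7)=0$ involves $de^1,\dots,de^4$; parametrising these subject to the flag conditions (using Lemma~\ref{lemma:solvable} on suitable elements of $V^4,V^5$ to rule out degenerate subcases where needed) turns $d^2=0$ into a finite polynomial system in $a$ and the remaining parameters. I would first try to isolate $a$ in a single component: in $d(de^7)=0$ the summand $a\,e^{45}$ contributes $-a\,e^4\wedge de^5$, and since $de^5$ contains $e^{34}+f_2\wedge e^7-f_3\wedge e^6=e^{34}-a\,e^{47}-a\,e^{36}+\cdots$, this yields a term in $a^2\,e^{346}$ that must be cancelled by contributions from $d(e^{14}+e^{23})$ and $df_1\wedge e^6-f_1\wedge de^6$; the hope is that the resulting equation reads $a^2\cdot c=0$ with $c\neq0$, forcing $a=0$. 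The real difficulty is the bookkeeping: one must carry enough of the a priori unknown differentials $de^1,\dots,de^4$ to evaluate that component, and one must verify that the coefficient $c$ cannot degenerate — exactly where the lower flag conditions enter. Should no single component decide it, the fallback is to solve the entire $d^2=0$ system and confirm $a=0$ is its only solution.
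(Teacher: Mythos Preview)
Your reduction and linear-consequence steps are correct and run parallel to the paper's, modulo a harmless $\SO(3)$-relabelling: the paper arranges $e^5\in V^5$, $e^6\in V^6$, $e^7\notin V^6$ and then, assuming the conclusion fails, normalises $V^6=\Span{e^1,e^2,e^3,e^4+e^7,e^5,e^6}$, whereas you put $e^7\in V^5$, $e^5\notin V^6$ and reduce to showing $a=0$ in $Z=ae_1+e_5$. The identifications $f_2,f_3$ you extract from $\iota_Z d e^6=\iota_Z d e^7=0$ correspond exactly to the paper's determination of $f_1,f_2$ from $de^{56}\in\Lambda^3V^6$.

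The gap is in the Jacobi step, which is the entire content of the lemma. Your hope that a single component of $d^2e^7$ yields $a^2c=0$ with $c\neq0$ does not pan out: the $e^{346}$-coefficient you isolate receives contributions from $d(e^{23})$, $d(f_1\wedge e^6)$ and $d(ue^{67})$ that depend on the still-free parameters in $de^2,de^3,de^4$ and on $f_1$, and these do \emph{not} cancel to leave a nonzero constant times $a^2$. In the paper's (equivalent) normalisation the full analysis occupies several pages: one must first pin down $V^5$ via Lemma~\ref{lemma:solvable} applied to $e^5\wedge(de^5)^2$, pass to an auxiliary orthonormal frame $E^1,\dots,E^4$ adapted to $V^5$, split $d$ on $\Lambda^*\Span{E^1,\dots,E^4}$ into a pair of derivations $\delta,\gamma$ related by $\gamma\eta=J_1*\delta^2\eta$, and then run a case analysis (on whether a certain projection $\beta$ equals $E^4$, and if not, on whether two scalar invariants coincide) in which each branch terminates only after reconstructing a putative $4$-dimensional Lie algebra and checking that $\delta^2\neq0$. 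Your ``fallback'' is thus the actual proof, and it is neither short nor mechanical; as written, the proposal is an outline of the correct strategy but not a proof.
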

\begin{proof}
By Lemma~\ref{lemma:structure}, we can act by an element of $\SO(4)$ and obtain that $e^5$ is in $V^5$ and $e^6$ is in $V^6$. Moreover by dimension count
\[\dim\Span{e^1,e^2,e^3,e^4}\cap V^6\geq 3.\]
Thus, up to $\SO(4)$ action we can assume that $e^1,e^2,e^3,e^5,e^6$ lie in $V^6$.
Hence,  by Lemma~\ref{lemma:structure}, $e^7$ is not in $V^6$.

We must show that $e^4$ is also in $V^6$. Suppose otherwise. Then there is some $a\in\R$ with $e^4+ae^7$ in $V^6$. Up to $\R^*$ action we can assume $a=1$, that is
\[V^6=\Span{e^1,e^2,e^3,e^4+e^7,e^5,e^6}.\]
Therefore, \eqref{eqn:qc} gives
\[de^{56}=(e^{36}-f_2\wedge e^6+e^{25}-f_1\wedge e^5)\wedge e^7\mod\Lambda^3V^6+\Span{e^{567}};\]
and, on the other hand, $e^{56}$ is in $\Lambda^2V^6$, so $f_2=e^3$ and $f_1=e^2$, and
\begin{equation}
 \label{eqn:de7_4}
 de^7=e^{14}+e^{23}+e^{26}-e^{35} \mod \Span{e^{56},e^{57},e^{67}}.
\end{equation}

Write
\[f_3=f_3'+\lambda e^4, \quad f_3'\in \Span{e^1,e^2,e^3}.\]
Then
\[de^5=e^{12}+e^{3}(e^4+e^7) -f_3'\wedge e^6 - \lambda e^{46}\mod  \Span{e^{56},e^{57},e^{67}},\]
but by construction $de^5$ has to be in $\Lambda^2V^6$, so
\[de^5=e^{12}+e^{3}(e^4+e^7)-f_3'\wedge e^6+  \lambda e^6\wedge(e^4+e^7)  \mod \Span{e^{56}}.\]
Imposing $(de^5)^3=0$ we get
\[de^5=e^{12}+e^{3}(e^4+e^7)+  \lambda e^6\wedge(e^4+e^7) -f_3'\wedge e^6.\]
If \[f_3'=\mu_1e^1+\mu_2 e^2+\mu_3 e^3,\]
then $(de^5)^2\wedge e^5$ should be equal to $2E^{1234}\wedge e^5$,
where
\[E^1=e^1+\mu_2e^6,\quad E^2= e^2-\mu_1 e^6,\quad E^3=e^3+\lambda e^6,\quad E^4= e^4+e^7-\mu_3 e^6.\]
Indeed, \[de^5=E^{12}+E^{34}.\]
By Lemma~\ref{lemma:solvable}, it follows that
\[V^5=\Span{E^1,E^2,E^3,E^4, e^5}.\]
We can rewrite \eqref{eqn:de7_4} as
\[de^7=e^{14}+e^{23}+e^{26}-e^{35}+ae^{57}+be^{67}+ ce^{56}.\]
Then computing $d^2e^7$
mod $\Lambda^3V^6$ we find
\begin{equation}
 \label{eqn:de1}
0=d(-e^1+ae^5+be^6)=d(-E^1+ae^5+(\mu_2+b)e^6).
\end{equation}
We claim that \[\mu_2+b=0.\]
Otherwise,  \[de^6=\frac1{\mu_2+b}(dE^1-ade^5)\in\Lambda^2V^5,\]
and on the other hand
\begin{equation}
 \label{eqn:de6}
 de^6=e^{13}+e^{42} + f_3'\wedge e^5 +\lambda e^{45}- e^{27}\mod \Span{e^{56},e^{57},e^{67}};
\end{equation}
thus, $\alpha_3$ is zero. Now
\[d^2e^7 = d((ae^5+(\mu_2+b)e^6-E^1)e^7+ce^{56})\mod \Lambda^3V^5dd{}{,}\]
which is not zero because $e^{236}$ is not in $\Lambda^3V^5$.

We can therefore assume that $b=-\mu_3$; then
\begin{equation}
\label{eqn:dE1}
dE^1=ade^5=a(E^{12}+E^{34}).
\end{equation}
Imposing that $de^6$ is in $\Lambda^2 V^6$, \eqref{eqn:de6} becomes
\[de^6= e^{13} + f_3'\wedge e^5+\lambda(e^4+e^7)e^{5}- e^2(e^4+e^{7})+ xe^{56},\]
for some real $x$.
In order to simplify the notation, we shall think of $E^1,E^2,E^3,E^4$ as a coframe on a $4$-dimensional vector space, defining an $\SU(2)$-structure; in particular a scalar product is defined, allowing us to take interior products of forms, 
as well as three complex structures. Explicitly, we set
\[J_1= \cdot\hook (E^{12}+E^{34}), \quad J_2= \cdot\hook (E^{13}+E^{42}), \quad J_3= \cdot\hook (E^{14}+E^{23}).\]
We denote by $\beta$ the projection of $f_3$ on $\Span{E^1,E^2,E^3,E^4}$. Thus,
\[de^6=E^{13}+E^{42}+\beta\wedge e^5 + (J_3\beta + xe^5)e^6.\]

Now define two derivations $\delta,\gamma$ on $\Lambda\Span{E^1,E^2,E^3,E^4}$, of degrees one and zero respectively, by the rule
\[d\eta=\delta\eta+e^5\wedge \gamma\eta.\]
Then $d^2=0$ implies
\[\delta\gamma=\gamma\delta, \quad \delta^2\eta+(E^{12}+E^{34})\wedge\gamma \eta=0,\]
and so $\delta$ determines $\gamma$ via
\begin{equation}
 \label{eqn:gammadetermined}
 \gamma\eta=J_1 *\delta^2\eta,
\end{equation}
for any $1$-form $\eta$.

In particular, \eqref{eqn:dE1} gives
\[\delta E^1=a(E^{12}+E^{34}), \quad \gamma E^1=0,\]
which is consistent with \eqref{eqn:gammadetermined} because $d^2e^5=0$ implies
\[\delta (E^{12}+E^{34})=0, \quad \gamma(E^{12}+E^{34})=0.\]
Due to \eqref{eqn:gammadetermined}, the latter can be rewritten as
\begin{equation}
\label{eqn:stardeltasq}
0=\sum_{i=1}^4 E^i\wedge *\delta^2 E^i = \sum *(E_i\hook \delta^2 E^i).
\end{equation}

Taking $d^2e^6$ and separating the components we find
\begin{gather*}
\gamma(J_3\beta)=0,\quad
\delta(J_3\beta) + x(E^{12}+E^{34})=0,\\
\delta(E^{13}+E^{42})=\beta\wedge(E^{12}+E^{34})+J_3\beta\wedge(E^{13}+E^{42})=2\beta\wedge(E^{12}+E^{34}),\\
\gamma(E^{13}+E^{42})=-\delta\beta+x(E^{13}+E^{42})+J_3\beta\wedge\beta.
\end{gather*}
Similarly,
\[de^7=E^{14}+E^{23}-E^3\wedge e^5+(E^2- J_2\beta+(c-\lambda) e^5)\wedge e^6+(ae^5- E^1)\wedge e^7;\]
taking $d$ again and separating the components we get
\begin{gather*}
\delta(E^{14}+E^{23})=-3E^{123}-J_2\beta\wedge(E^{13}+E^{42}),\\
\gamma(E^{14}+E^{23})+\beta\wedge (E^2-J_2\beta)-(c-\lambda)(E^{13}+E^{42})+a(E^{14}+E^{23})=0,\\
\delta(E^2-J_2\beta)+(c-\lambda)(E^{12}+E^{34})+(E^2-J_2\beta)(-J_3\beta-E^1)=0,\\
\gamma(E^2-J_2\beta)+(c-\lambda)(-J_3\beta-E^1)-(a-x)(E^2-J_2\beta)=0.
\end{gather*}
In order to make use of these equations we need $E_2-J_2\beta$ to be nonzero, so let us assume first
\begin{equation}
\label{eqn:assumebetaE4}
\beta=E^4,
\end{equation}
and obtain a contradiction. Indeed, in this case  $c=\lambda$ and $a=-x=0$, for
\[0=(E^{12}+E^{34})\wedge \gamma (E^{12}+E^{34})=(E^{13}+E^{42})\wedge \gamma (E^{14}+E^{23}).\]
Hence, we have
\begin{gather*}
\delta E^1=0,\quad
\delta(E^{12}+E^{34})=0, \quad
\gamma(E^{12}+E^{34})=0,\\
\delta(E^{13}+E^{42})=2E^{124},\quad
\gamma(E^{13}+E^{42})=-\delta E^4-E^{14},\\
\delta(E^{14}+E^{23})=-2E^{123},\quad
\gamma(E^{14}+E^{23})=0.
\end{gather*}
In particular, since $\gamma$ is a derivation, we see that $\gamma(E^{13}+E^{42})$ gives zero on wedging with
\begin{equation}
 \label{eqn:omega_is}
E^{12}+E^{34}, \quad E^{13}+E^{42}, \quad E^{14}+E^{23},
\end{equation}
and so the same holds of $\delta E^4+E^{14}$. Thus $\delta E^4$ has the form
\begin{equation}
 \label{eqn:xyz}
\delta E^4=-\frac12(E^{14}+E^{23})+x(E^{12}-E^{34})+y(E^{13}-E^{42})+z(E^{14}-E^{23}).
\end{equation}
Wedging $E^1$ with the forms \eqref{eqn:omega_is} and applying $\delta$ we obtain
\[\delta E^{134}=0=\delta E^{123}=\delta E^{124}.\]
Similarly,
\[\delta E^{234}=\delta (E^{14}+E^{23})\wedge E^4+(E^{14}+E^{23})\wedge \delta E^4=-3E^{1234}.\]
Using \eqref{eqn:xyz} we get
\begin{multline}
\label{eqn:delta2E4}
\delta^2E^4\wedge E^4=E^{1234} +(x\delta (E^{12}-E^{34})+y\delta(E^{13}-E^{42})+z\delta(E^{14}-E^{23}))\wedge E^4\\
=E^{1234}+x\delta E^{124}-x(E^{12}-E^{34})\wedge\delta E^4+y\delta E^{134}-y(E^{13}-E^{42})\wedge\delta E^4-zdE^{234}-z(E^{14}-E^{23})\wedge\delta E^4\\
=E^{1234}(1+2x^2+2y^2+3z +2z^2).
\end{multline}
Observe that
\[\delta E^{14}=-E^1\wedge\delta E^4=E^1\wedge\gamma (E^{13}+E^{42})=-\gamma E^{124},\]
so
\[\delta^2E^4=-\delta E^{14}-2\gamma E^{124}=\delta E^{14}.\]
By Equation \eqref{eqn:xyz},
\[\delta E^{14}\wedge E^4=-E^{14}\wedge \delta E^4=(z+\frac12)E^{1234};\]
comparing with \eqref{eqn:delta2E4} we obtain
\[2x^2+2y^2+2z^2+2z+\frac12=0,\]
and therefore
\[\delta E^4=-E^{14}.\]
In particular, we see that $\gamma=0$ and $\delta$ defines a $4$-dimensional Lie algebra characterized by the equations
\[\delta E^1=0, \quad \delta E^4=-E^{14}, \quad \delta(E^{12}+E^{34})=0, \quad \delta(E^{13}+E^{42})=2E^{124},\quad\delta E^{23}=-2E^{123}.\]
It is easy to check that no such Lie algebra 
exists. Indeed
these linear conditions on $\delta$ imply
\begin{gather*}
\delta E^2=p(E^{12}-E^{34})+q(E^{13}-E^{42})-\frac32(E^{12}+E^{34}),\\
\delta E^3=q(E^{12}-E^{34})-p (E^{13}-E^{42})-\frac12(E^{13}+E^{42})
\end{gather*}
for some $p,q$; but then $\delta^2$ is not zero. Summing up, \eqref{eqn:assumebetaE4} leads to a contradiction.

\smallskip
We can therefore assume that
\[0\neq\tilde E^1=E^1+J_3\beta.\]
Then
\[\tilde E^1, \quad \tilde E^2=J_1\tilde E^1=E^2-J_2\beta, \quad \tilde E^3=J_2\tilde E^1=E^3+J_1\beta, \quad \tilde E^4=J_3\tilde E^1=E^4-\beta\]
is an orthonormal basis up to a scale factor, hence it also satisfies \eqref{eqn:stardeltasq}.
We compute
\begin{equation}
\label{eqn:delta2E2}
\delta^2\tilde E^2=-(c-\lambda)(E^{12}+E^{34})\tilde E^1  -(a-x)\tilde E^2(E^{12}+E^{34}),
\end{equation}
and therefore
\[\tilde E^2\hook \delta^2\tilde E^2 = -(a-x)\tilde E^{34}.\]
 Now \eqref{eqn:stardeltasq} implies that $a-x=0$ and
\[\tilde E^3\wedge *\delta^2\tilde E^3+\tilde E^4\wedge\delta^2\tilde E^4=0,\]
so in particular $*\delta^2\tilde E^3$ and $*\delta^2\tilde E^3$ are in the span of $\tilde E^3$ and $\tilde E^4$.

Using \eqref{eqn:gammadetermined}, we find
\[J_1\gamma(\tilde E^{14}+\tilde E^{23})=-\tilde E^1\wedge *\delta^2\tilde E^4-*\delta^2\tilde E^2\wedge \tilde E^3 - \tilde E^2\wedge*\delta^2\tilde E^3\]
On the other hand, we know that
\[J_1\gamma(E^{14}+E^{23})-J_1\beta\wedge\tilde E^1+(c-\lambda)(E^{13}+E^{42})-a(E^{14}+E^{23})=0.\]
Comparing the two expressions and using \eqref{eqn:delta2E2}, we find
\[- \tilde E^1\wedge *\delta^2\tilde E^4 - \tilde E^2\wedge*\delta^2\tilde E^3+(c-\lambda)\tilde E^{23}-\norm{\tilde E^1}^2J_1\beta\wedge\tilde E^1+(c-\lambda)(\tilde E^{13}+\tilde E^{42})-a(\tilde E^{14}+\tilde E^{23})=0 .\]
This shows that $J_1\beta$ has no component along $\tilde E^2$, so
\[\norm{\tilde E^1}^2J_1\beta = \beta\hook\tilde E^{34} \mod \tilde E^1;\]
using the fact that $*\delta^2\tilde E^3$ and $*\delta^2\tilde E^4$ are in the span of $\tilde E^3$ and $\tilde E^4$, we deduce
\[*\delta^2\tilde E^3=-(c-\lambda)\tilde E^4-(a-c+\lambda)\tilde E^3, \quad *\delta^2\tilde E^4= \beta\hook\tilde E^{34}+ (c-\lambda)\tilde E^3-a\tilde E^4,\]
and therefore by \eqref{eqn:stardeltasq}
\[*\delta^2\tilde E^4= -(c-\lambda)\tilde E^3\mod \tilde E^4;\]
it follows that
\begin{equation}
\label{eqn:sequals}
\beta\hook\tilde E^{34} =-2(c-\lambda)\tilde E^3\mod \tilde E^4.
\end{equation}
Similarly, we compute
\begin{multline*}
J_1\gamma(\tilde E^{13}+\tilde E^{42}) = - \tilde E^2\wedge *\delta^2\tilde E^3 +*\delta^2\tilde E^4\wedge \tilde E^1+\tilde E^3\wedge *\delta^2\tilde E^2\\
=-(c-\lambda)(\tilde E^{13}+\tilde E^{42})+a(\tilde E^{14}+\tilde E^{23})+\norm{\tilde E^1}^2J_1\beta\wedge \tilde E^1.
\end{multline*}
Comparing with
\[\gamma(E^{13}+E^{42})=-\delta\beta+a(E^{13}+E^{42})+J_3\beta\wedge\beta,\]
we obtain
\[(c-\lambda-a)( E^{13}+ E^{42})-a( E^{14}+ E^{23}) +\beta\wedge  (J_3\beta-\tilde E^2) +\delta\beta=0.\]
Taking $\delta$,
\[aE^{123}+(c-\lambda-3a)( E^{13}+ E^{42})\wedge (\tilde E^2)+a( E^{14}+ E^{23})\wedge (-\tilde E^2)-\beta \wedge \tilde E^{12} + \delta^2\beta=0,\]
and therefore taking $*$
\begin{equation}
 \label{eqn:starofwhich}
a\beta-(c-\lambda-4a)\tilde E^4-
a\tilde E^3-\beta\hook\tilde E^{34}+ *\delta^2\beta=0.
\end{equation}
So we have two cases.

\emph{i}) If $a=c-\lambda$,
write \[\beta=r\tilde E^3+s\tilde E^4 \mod \tilde E^2;\]
working mod $\tilde E^2$, \eqref{eqn:starofwhich} gives\
\begin{multline*}0=\tilde E^3(ar ) + \tilde E^4(as+2a-ar )
+ (s-1)(\beta\hook\tilde E^{34}+ a\tilde E^3-a\tilde E^4)\\
=\tilde E^3(ar-as+a ) + \tilde E^4(3a-ar+ (s-1)r\norm{\tilde E^3}^2),\end{multline*}
which implies
\[a=0=r=s,\]
since $s=\frac{2a}{\norm{\tilde E_1}^2}$ by \eqref{eqn:sequals}.
Then $\beta$ is a multiple of $\tilde E^2$, and $\gamma$, $\delta^2$ are zero. 
In particular,
\[\delta\beta=J_3\beta\wedge\beta\]
is linearly dependent on
\[\delta \tilde E^2=\tilde E^{12},\]
implying that $\beta=0$.
Summing up, $\delta$ defines a $4$-dimensional Lie algebra characterized by the equations
\[\delta E^1=0, \quad \delta E^2=E^{12}, \quad \delta E^{34}=0=\delta (E^{13}+E^{42}), \quad \delta (E^{14}+E^{23})=-3E^{123}.\]
Much like in the case that $\beta=E^4$, one verifies that no such Lie algebra exists,
for these linear  conditions on $\delta$ imply
\begin{gather*}
\delta E^3=p(E^{13}-E^{42})+q(E^{14}-E^{23})+\frac12(E^{13}+E^{42}),\\
\delta E^4=q(E^{13}-E^{42})-(p+5/2) (E^{14}-E^{23})+2(E^{14}+E^{23}),
\end{gather*}
where $p$ and $q$ are real numbers; but then $\delta^2$ is not zero.

\emph{ii}) Suppose $a\neq c-\lambda$; then \eqref{eqn:starofwhich}  implies that $\beta$ lies in the span of $\tilde E^3$ and $\tilde E^4$, so $J_3\beta$ lines in the span of $\tilde E^1$ and $\tilde E^2$. Thus $\tilde E^1$ and $J_3\beta$ are linearly dependent, for otherwise
\[\delta\tilde E^2\in \Span{E^{12}+E^{34}},\]
which is absurd. It follows that $J_3\beta$ is a multiple of $\tilde E^1$, say
$\beta = s\tilde E^4$.
Then
\[1=\norm{E^4}^2=\norm{\tilde E^4+\beta}^2=(1+s)^2\norm{\tilde E^4},\]
so
\[c-\lambda=\frac{s}{2(1+s)^2},\]
and
\[*\delta^2\tilde E^3=-\frac{s}{2(1+s)^2}\tilde E^4+\left(\frac{s}{2(1+s)^2}-a\right)\tilde E^3, \quad *\delta^2\tilde E^4= -\frac{s}{2(1+s)^2}\tilde E^3-a\tilde E^4.\]
Then \eqref{eqn:starofwhich} gives
\[-\left(\frac{s}{2(1+s)^2}-4a\right)\tilde E^4+\left(\frac{s^2}{2(1+s)^2}-a\right)\tilde E^3=0,\]
so both $c-\lambda$ and $a$ are zero, which is absurd.
\end{proof}

\section{Classification of $7$-dimensional qc Lie algebras with a normal ascending flag}
\label{classification}
In this section we carry out the classification of $7$-dimensional  Lie algebras with an integrable qc structure and a normal ascending flag.

\begin{proposition}
There are exactly three non-isomorphic Lie algebras 
 of dimension $7$,
with an integrable qc structure and a normal ascending flag, namely
\[(0,0,0,0,12+34,13+42,14+23)\]
and
\begin{equation}  \label{solvable-examples}
\begin{aligned}
de^1&=0\\
de^2&={(1+\mu)} e^{12}- \mu e^{15}+ \mu e^{34}- \mu e^{46}\\
de^3&=- {(1+\mu)} e^{13}- {(2+3 \mu)} e^{24}- \mu e^{16}+ \mu e^{45}\\
de^4&=2  \mu e^{14}\\
de^5&=e^{12}+e^{34} - e^{46}\\
de^6&=e^{13}+e^{42} +  e^{45}\\
de^7&=e^{14}+e^{23} + \mu e^{56}\\
\end{aligned}
\end{equation}
where $\mu=-1,-1/3$.
\end{proposition}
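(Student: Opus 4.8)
The plan is to start from Lemma~\ref{lemma:e4}: fix an adapted coframe $e^1,\dots,e^7$ and a normal ascending flag as in \eqref{eqn:normalflag}, so that $e^1,e^2,e^3,e^4\in V^6$. Combining this with Lemma~\ref{lemma:structure} and acting by $\SO(4)$ — recall that $\SU(2)_+\subset\SO(4)$ moves $\Span{e^5,e^6,e^7}$ exactly as $\SO(3)$ moves $\R^3$ — I may assume that $V^6=\Span{e^1,\dots,e^6}$, that $V^5\cap\Span{e^5,e^6,e^7}=\Span{e^5}$, and that $e^7\notin V^6$. Since $V^6$ annihilates a one-dimensional ideal, $d$ maps $\Span{e^1,\dots,e^6}$ into its own second exterior power; feeding this into \eqref{eqn:qc} forces $f_1=f_2=0$, and the fact that $e^6\notin V^5$ while $de^5\in\Lambda^2V^5$ kills the $e^{56}$-coefficient of $de^5$. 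After these reductions, writing $f$ for $f_3\in\Span{e^1,e^2,e^3,e^4}$, the equations read
\[\begin{aligned}
de^5&=e^{12}+e^{34}-f\wedge e^6, &\quad de^6&=e^{13}+e^{42}+f\wedge e^5+x\,e^{56},\\
de^7&=e^{14}+e^{23}+a\,e^{57}+b\,e^{67}+c\,e^{56},
\end{aligned}\]
with $x,a,b,c\in\R$, together with equations for $de^1,\dots,de^4$ keeping $\Span{e^1,\dots,e^6}$ involutive.

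The next step is to impose $d^2=0$ on all generators, reusing a variant of the bookkeeping from the proof of Lemma~\ref{lemma:e4}: the six-dimensional algebra $\Span{e^1,\dots,e^6}$ is encoded by the derivations $\delta,\gamma$ of $\Lambda^\bullet\Span{e^1,e^2,e^3,e^4}$ (of degrees $1$ and $0$) defined by $d\eta=\delta\eta+e^5\wedge\gamma\eta$, with $d^2=0$ on $e^1,\dots,e^6$ equivalent to $\delta\gamma=\gamma\delta$ together with identities that in particular recover $\gamma$ from $\delta$, while the last condition $d^2e^7=0$ produces a further batch of quadratic relations involving $a,b,c$. The classification then becomes the problem of solving this finite quadratic system for $(\delta,f)$ and the four scalars, modulo the residual symmetry $\SU(2)_-\times\R^*$, which acts transitively up to scale on the nonzero vectors of $\Span{e^1,e^2,e^3,e^4}$.

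I would then split into cases according to whether the torsion data $f,x,a,b,c$ vanish. If they all vanish, the equations say that $\delta$ is a derivation of the abelian four-dimensional algebra commuting with $e^{12}+e^{34}$, $e^{13}+e^{42}$ and $e^{14}+e^{23}$, and an argument of the kind carried out twice inside the proof of Lemma~\ref{lemma:e4} forces $\delta=\gamma=0$; this recovers exactly the quaternionic Heisenberg algebra $(0,0,0,0,12+34,13+42,14+23)$. In the complementary case I would normalise $f$ to $e^4$ using $\SU(2)_-\times\R^*$ and then work the quadratic system down; it collapses to a one-parameter family which, after relabelling its parameter as $\mu$, is \eqref{solvable-examples}, and the one remaining condition — the vanishing of the $e^{124}$-component of $d^2e^3$, which equals $-4(1+\mu)(1+3\mu)$ — leaves only $\mu=-1$ and $\mu=-\tfrac13$. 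One must also verify, within the same computation, that the degenerate sub-branches (for instance $f=0$ but some of $x,a,b,c$ nonzero) produce nothing new.

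Finally, I would check that the three algebras are pairwise non-isomorphic. The quaternionic Heisenberg algebra is $2$-step nilpotent, whereas for $\mu\in\{-1,-\tfrac13\}$ the relation $de^4=2\mu\,e^{14}$ forces $\ad e_1$ to have nonzero eigenvalues, so those two algebras are not nilpotent. To separate them I would use a numerical invariant: here $[\lie{g},\lie{g}]$ is six-dimensional and coincides with the nilradical, and the eigenvalues of $\ad X$ on it — for $X$ spanning $\lie{g}/[\lie{g},\lie{g}]$, hence well-defined up to a common scalar — form the multiset $\{0,1,-1,\mu,-\mu,-2\mu\}$, which has a repeated nonzero value for $\mu=-1$ but not for $\mu=-\tfrac13$. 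I expect the real obstacle to be the second half of the third step: organising the quadratic system after normalising $f$ so that the enumeration of branches stays finite and every spurious solution is eliminated — the Heisenberg branch and the non-isomorphism check are comparatively routine once that is done.
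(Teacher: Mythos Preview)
Your setup through the first display is correct and matches the paper's opening moves. After that, two points deserve attention.

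First, the $\delta,\gamma$ bookkeeping lifted from the proof of Lemma~\ref{lemma:e4} does not transplant directly: writing $d\eta=\delta\eta+e^5\wedge\gamma\eta$ for $\eta\in\Span{e^1,\dots,e^4}$ presupposes that $d\eta\in\Lambda^2\Span{e^1,\dots,e^5}$, i.e.\ that $e^1,\dots,e^4\in V^5$. But Lemma~\ref{lemma:e4} only gives $e^1,\dots,e^4\in V^6$, and in fact in the non-Heisenberg branch one finds $V^5=\Span{e^1,e^2,e^3+e^6,e^4,e^5}$, so $de^3$ genuinely carries $e^6$-terms. The paper organises the computation differently, via the bigrading $\Lambda^{p,q}=\Lambda^p\Span{e^1,\dots,e^4}\wedge\Lambda^q\Span{e^5,e^6,e^7}$, which is available without knowing $V^5$; it also eliminates your $a,b$ at the outset (one line: $d^2e^7=0$ forces $de^7\equiv e^{14}+e^{23}\bmod e^{56}$), which considerably shortens the remaining system.

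Second, and more substantively, your Heisenberg-branch argument has a gap. The conditions ``$\delta$ kills $\omega_1,\omega_2,\omega_3$'' impose only twelve linear equations on the twenty-four coefficients of $\delta\colon\R^4\to\Lambda^2\R^4$, so by themselves they do not force $\delta=0$; the analogous statement for $\gamma$ only pins $\gamma$ down to $\mathfrak{sp}(1)$. The paper's route here is quite different and uses a non-trivial external input: once $f=x=c=0$ one has $V^5=\Span{e^1,\dots,e^5}$, and after exploiting $V^4$ one sees that $\Span{e_5,e_6,e_7}$ is an ideal, so $\lie{g}$ projects onto a four-dimensional solvable Lie algebra carrying a left-invariant hyperk\"ahler structure (the $\omega_s$ are closed). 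Such a group is Ricci-flat, hence flat by Alekseevski\u{\i}--Kimel$'$fel$'$d, hence abelian; this is what forces $de^1=\dots=de^4=0$. A purely computational substitute is possible but is longer than your sketch suggests and is not ``of the kind carried out in Lemma~\ref{lemma:e4}''.

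Your non-isomorphism argument via the spectrum of $\ad e_1$ is correct and is a legitimate alternative to the paper's comparison of $H^2(\lie{g}^*)$.
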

\begin{proof}
Let $\lie{g}$ be a  Lie algebra with an integrable qc structure and a fixed flag $V^i$ as in \eqref{eqn:normalflag}. By Lemma~\ref{lemma:e4}, we know that $e^1,e^2,e^3,e^4$ are in $V^6$; moreover, by the argument in the proof of that same lemma, we can assume that $e^5$ is in $V^5$ and $e^6$ is in $V^6$.

The characterization of the $V^i$ implies $\alpha_1=0=\alpha_2$, and
\begin{align*}
de^5&=e^{12}+e^{34} -\alpha_3\wedge e^6 \mod \Span{e^{56}},\\
de^6&=e^{13}+e^{42} + \alpha_3\wedge e^5\mod \Span{e^{56}},\\
de^7&=e^{14}+e^{23} \mod \Span{e^{56},e^{57},e^{67}}.
\end{align*}
We claim that
\begin{equation}
 \label{eqn:de7}
 de^7=e^{14}+e^{23} \mod \Span{e^{56}}.
\end{equation}
Indeed $de^{56}$ and $d(e^{14}+e^{23})$ are in $\Lambda^3V^6$, whereas
\[de^{57}=e^{127}+e^{347} -\alpha_3\wedge e^{67}\mod (\Lambda^3V^6 + \Span{e^{567}}),\]
\[de^{67}=e^{137}+e^{427} +\alpha_3\wedge e^{57}\mod (\Lambda^3V^6 + \Span{e^{567}}).\]
Thus, $d^2e^7=0$ implies \eqref{eqn:de7}.

Let us consider the splitting
 \[\Lambda^h\lie{g}^*=\bigoplus_{p+q=h} \Lambda^{p,q}=\bigoplus_{p+q=h}\Lambda^p\Span{e^1,e^2,e^3,e^4}\wedge\Lambda^qq\Span{e^5,e^6,e^7}.\]
Observe that
\[de^{56}=e^{126}+e^{346}-e^{135}-e^{425};\]
in particular $de^{56}$ has no (1,2)-component. Thus \eqref{eqn:de7} implies that $d(e^{14}+e^{23})$ has no (1,2)-component either, whence
\begin{equation}
\label{eqn:no02}
d\Lambda^{1,0}\subset\Lambda^{2,0}\oplus\Lambda^{1,1}.
\end{equation}
Then
\[0=(d^2e^5)^{1,2}=-d(\alpha_3\wedge e^6)^{1,2};\]
and the same holds of $\alpha_3\wedge e^5$.

We know that  $de^5$ equals $e^{12}+e^{34} -\alpha_3\wedge e^6$ plus a multiple $\epsilon$ of $e^{56}$, but if this multiple were nonzero, then
\[(de^5)^3=6\epsilon e^{123456}\neq 0,\]
contradicting Lemma~\ref{lemma:solvable} and the assumption $e^5\in V^5$. So, for some constants $\lambda,\mu\in\R$, we have
\begin{align*}
de^5&=e^{12}+e^{34} -\alpha_3\wedge e^6,\\
de^6&=e^{13}+e^{42} + \alpha_3\wedge e^5 +\lambda e^{56},\\
de^7&=e^{14}+e^{23} + \mu e^{56}.
\end{align*}
Then
\[0=d(\alpha_3\wedge e^6)^{1,2}=d\alpha_3^{1,1}\wedge e^6-\lambda \alpha_3\wedge e^{56}, \quad 0=d(\alpha_3\wedge e^5)^{1,2}=d\alpha_3^{1,1}\wedge e^5,\]
whence
\[(d\alpha_3)^{1,1}=\lambda \alpha_3\wedge e^5.\]
We have two cases, according to whether $\alpha_3$ is zero or not.

\emph{a}) If $\alpha_3=0$,
we know that $e^5$ is in $V^5$, and $e^5\wedge (de^5)^2=2e^{12345}$; hence,
\[V^5=\Span{e^1,e^2,e^3,e^4,e^5}.\]
Then
\begin{align*}
0&=d^2e^6=\lambda (e^{126}+e^{346})  \mod \Lambda^3V^5,\\
0&=d^2e^7=\mu (e^{126}+e^{346})  \mod \Lambda^3V^5.
\end{align*}
These equations imply that $\lambda=\mu=0$.

Now $\Span{e^1,\dotsc, e^4}$ intersects $V^4$ in a space of dimension at least three, so up to $\SO(4)$ action we can assume
\[V^4=\Span{e^1,e^2,e^3,e^4+ae^5}, \quad a\in\R,\]
whence
\[de^4=-ade^5 = -ae^{34}=a^2e^{35} \mod \Lambda^2V^4.\]
Therefore
\[0=d^2e^5=-a de^3\wedge e^5\mod \Lambda^3V^4,\]
\[0=d^2e^6=a^2 e^{235} + ae^5\wedge de^2  \mod \Lambda^3V^4,\]
\[0=d^2e^7=-a^2 e^{135} - ae^5\wedge de^1  \mod \Lambda^3V^4.\]
We claim that $a=0$. In fact if $a\neq0$, we see that $de^1$, $de^2$ and $de^3$ are in $\Lambda^2\Span{e^1,e^2,e^3}$, and so, taking $d^2$ of $e^5,e^6,e^7$, it follows that $de^4$ must also be in $\Lambda^2\Span{e^1,e^2,e^3,e^4}$. Denoting by $e_1,\dotsc, e_7$ the basis of $\lie{g}$ dual to $e^1,\dotsc, e^7$, we see that $\Span{e_5,e_6,e_7}$ is an ideal, and  $\lie{g}$ projects onto a hyperk\"ahler 4-dimensional, solvable algebra. This has to be abelian, because the corresponding Lie group is a homogeneous Ricci-flat manifold, hence flat by~\cite{Alek}. This implies that $e^4$ is closed, which contradicts the assumption $a\neq0$.
Consequently, $a=0$. This implies that $\Span{e_5,e_6,e_7}$ is an ideal, so by the same argument as above, $\Span{e_1,e_2,e_3,e_4}$ is abelian, and
\[\lie{g}=(0,0,0,0,12+34,13+42,14+23).\]

\emph{b}) Suppose $\alpha_3$ is non-zero. In this case, up to $\SO(4)$ action, we can assume that $\alpha_3$ is a multiple of $e^4$, and up to $\R^*$ action, we obtain $\alpha_3=e^4$. The equations become
\begin{align*}
de^5&=e^{12}+e^{34} - e^{46}\\
de^6&=e^{13}+e^{42} +  e^{45} +\lambda e^{56}\\
de^7&=e^{14}+e^{23} + \mu e^{56}\\
(de^4)^{1,1}&=\lambda e^{45}.
\end{align*}
In particular, looking at $(de^5)^2\wedge e^5$ we compute
\[V^5=\Span{e^1,e^2,e^3+e^6,e^4,e^5,}.\]
Thus
\[de^3+de^6=\gamma_3\wedge e^5+\beta_3\wedge (e^3+e^6) + h (e^{35}+e^{65})\mod \Lambda^2\Span{e^1,e^2,e^4}\]
where from now on the $\gamma_i$, $\beta_i$ are in $\Span{e^1,e^2,e^4}$. We can determine $h$ by
\[de^3 = -e^{13}-e^{45}-\lambda e^{56}+\gamma_3\wedge e^5+\beta_3\wedge (e^3+e^6)+ h (e^{35}+e^{65})\mod \Lambda^2\Span{e^1,e^2,e^4},\]
which by \eqref{eqn:no02} implies $h=-\lambda$.
 Similarly,
 \begin{align*}
 de^1&=\gamma_1\wedge e^5+\beta_1\wedge (e^3+e^6)\mod \Lambda^2\Span{e^1,e^2,e^4},\\
 de^2&=\gamma_2\wedge e^5+\beta_2\wedge (e^3+e^6)\mod \Lambda^2\Span{e^1,e^2,e^4}.
 \end{align*}
Now observe that
\[(d^2e^7)^{2,1}=(de^{14})^{2,1}+(de^2)^{1,1}\wedge e^3-e^2\wedge (de^{3})^{1,1}+\mu(e^{126}+e^{346}-e^{135}-e^{425});\]
therefore
\[0=(de^2)^{1,1}\wedge e^3 +\mu(e^{346}-e^{135})+\lambda e^{235} \mod \Lambda^3\Span{e^1,e^2,e^4,e^5,e^6},\]
so the $(1,1)$-component of $de^2$ is $-\mu e^{46}-\mu e^{15} +\lambda e^{25}$,
and
\[de^2=-\mu e^{15}+\lambda e^{25}-\mu e^4\wedge (e^3+e^6)\mod \Lambda^2\Span{e^1,e^2,e^4}.\]
Now
\begin{multline*}
0=(d^2e^6)^{2,1}=(de^{1})^{1,1}\wedge e^3-e^1\wedge (de^3)^{1,1}+\lambda e^{346}-\lambda e^{135} \mod \Lambda^3\Span{e^1,e^2,e^4,e^5,e^6}\\
=(de^{1})^{1,1}\wedge e^3+\lambda e^{346}\mod \Lambda^3\Span{e^1,e^2,e^4,e^5,e^6},
\end{multline*}
so
\[(de^{1})^{1,1} =-\lambda e^{46}.\]
On the other hand
\begin{multline*}
(d^2e^5)^{2,1}=(de^{12})^{2,1}+(de^3+de^6)^{1,1}\wedge e^4-\lambda e^{345}-e^6\wedge (de^4)^{2,0}\\
= -\lambda e^{125} - \gamma_3\wedge e^{45}   \mod \Lambda^3\Span{e^1,e^2,e^4,e^6},
\end{multline*}
which implies that $\lambda=0$ and $\gamma_3=ke^4$ for some $k\in\R$,
i.e.
\begin{align*}
de^1&=0 &&\mod \Lambda^2\Span{e^1,e^2,e^4},\\
de^2&=-\mu e^{15}-\mu e^4\wedge (e^3+e^6) &&\mod \Lambda^2\Span{e^1,e^2,e^4},\\
de^3+de^6&=ke^{45}+\beta_3\wedge (e^3+e^6) &&\mod \Lambda^2\Span{e^1,e^2,e^4},\\
de^4&=0 &&\mod \Lambda^2\Span{e^1,e^2,e^4},\\
de^5&=e^{12}+e^{34} - e^{46},\\
de^6&=e^{13}+e^{42} +  e^{45},\\
de^7&=e^{14}+e^{23} + \mu e^{56}.
\end{align*}
Assume first that $\mu=0$. Then
\begin{align*}d^2e^5&= -e^3\wedge de^4 + \beta_3\wedge e^{34} &&\mod \Lambda^3\Span{e^1,e^2,e^4,e^5,e^6},\\
d^2e^6&= de^1\wedge e^3-e^1\wedge \beta_3\wedge e^3 &&\mod \Lambda^3\Span{e^1,e^2,e^4,e^5,e^6},\\
d^2e^7&= de^2\wedge e^3-e^2\wedge \beta_3\wedge e^3- e^{123} &&\mod \Lambda^3\Span{e^1,e^2,e^4,e^5,e^6}.
\end{align*}
With an appropriate change in the definition of $k$, we obtain
\begin{align*}
de^1&=e^1\wedge \beta_3, &
de^2&=e^2\wedge\beta_3+e^{12},\\
de^3&=-e^{13}+ke^{45}+\beta_3\wedge (e^3+e^6) &&\text{mod } \Lambda^2\Span{e^1,e^2,e^4},\\
de^4&=-\beta_3\wedge e^{4},&
de^5&=e^{12}+e^{34} - e^{46},\\
de^6&=e^{13}+e^{42} +  e^{45},&
de^7&=e^{14}+e^{23}.
\end{align*}
Thus $d\beta_3=\beta_3(e_2)e^{12}$. Moreover,
\[0=d^2e^4=-d\beta_3\wedge e^4,\]
so $d\beta_3=0$. Thus
\[0=d^2e^2=e^{12}\wedge\beta_3 -2e^{12}\wedge \beta_3,\]
which implies $\beta_3$ is a multiple of $e^1$. More precisely,
\[0=d^2e^6=-ke^{145}-\beta_3\wedge e^{45}+2(\beta_3-e^1)e^{24}\]
implies $\beta_3=e^1$. But then
\[d^2e^7=-e^2\wedge de^3\neq0,\]
which is absurd.

\smallskip
Thus $\mu\neq0$.
Then $de^{12}$ and $de^{24}$ are linearly independent (mod $e^{124}$), and 
in consequence the
exact forms in  $\Lambda^2\Span{e^1,e^2,e^4}$ are multiples of $e^{14}$; in particular,
\[de^1,de^4\in\Span{e^{14}}.\]
By Lemma~\ref{lemma:solvable}, $V^4$ contains no linear combination of the form 
$e^5+ae^1$, $e^5+ae^4$, which means that $e^1,e^4$ are in $V^4$. So a linear combination of $e^1,e^4$ is in $V^3$.

Now
\[
(d^2e^6)^{2,1}=(1-k-\mu)e^{145} + e^{16}\wedge\beta_3 + de^4\wedge e^5
\]
shows that $\beta_3$ is a multiple of $e^1$ and
\[de^4=(k+\mu-1)e^{14}.\]
Similarly,
\begin{multline*}
(d^2e^5)^{2,1}=(de^{12})^{2,1}+(de^3+de^6)^{1,1}\wedge e^4-e^6\wedge de^4\\
= (\mu e^{14} -\beta_3\wedge e^4-(k+\mu-1)e^{14})\wedge e^{6}
\end{multline*}
shows that \[\beta_3=(1-k)e^1.\]
Finally, we have
\[ (d^2e^7)^{2,1}=(1-k)e^{245}+(1-k)e^{126}+\mu e^{126}+\mu e^{245}.\]
Consequently, $k=\mu+1$ and
\begin{align*}
de^1&=0 &&\mod e^{14},\\
de^2&=-\mu e^{15}-\mu e^4\wedge (e^3+e^6) &&\mod \Lambda^2\Span{e^1,e^2,e^4},\\
de^3+de^6&=(\mu+1)e^{45}-\mu e^1\wedge (e^3+e^6) &&\mod \Lambda^2\Span{e^1,e^2,e^4},\\
de^4&=2\mu e^{14},\\
de^5&=e^{12}+e^{34} - e^{46},\\
de^6&=e^{13}+e^{42} +  e^{45},\\
de^7&=e^{14}+e^{23} + \mu e^{56}.
\end{align*}
Imposing  $d^2=0$, a straightforward computation leads to \eqref{solvable-examples},
with $\mu=-1,-1/3$. Notice that the two resulting Lie algebras are non-isomorphic because 
their second cohomology groups $H^2(\lie{g^*})$ are different. In fact, $H^2(\lie{g^*})$ is $2$-dimensional if $\mu=-1$ but it is zero 
if $\mu=-1/3$.
One can check that a normal ascending flag exists in both cases by setting
\begin{gather*}
V^1=\Span{e^1}, \quad V^2=\Span{e^1,e^4}, \quad V^3=\Span{e^1,e^4,e^2-\mu e^5},\\ V^4=\Span{e^1,e^4,e^2-\mu e^5,e^3+e^6}.\qedhere
\end{gather*}
\end{proof}

\section{$7$-dimensional qc manifolds with non-vanishing torsion endomorphism
and closed fundamental $4$-form}

The purpose of this section is to prove Theorem \ref{main-theorem}.
For this, we consider the simply connected solvable Lie group 
$G_s$ $(s=1,2)$
of dimension $7$ whose Lie algebra 
$\lie{g}_s$
is defined by \eqref{solvable-examples} 
considering there $\mu=-1$ for $\lie{g}_1$, and
$\mu=-\frac{1}{3}$ for $\lie{g}_2$. 
We show that $G_s$
has an integrable left invariant qc structure
such that the fundamental $4$-form is closed, but
the torsion endomorphism of the Biquard connection is non-zero.
Firstly, we need some definitions and results about the
torsion endomorphism of the Biquard connection on a qc manifold.

Let $M$ be a manifold of dimension $4n+3$ with
a qc structure that we suppose integrable when $n=1$. According to
Section $2$, we know that  $M$ has a
distribution $H$ of dimension $4n$, locally determined by the kernel
of three differential 1-forms $\eta_r$ on $M$, and such that there is
an almost quaternion Hermitian structure $(g, I_1, I_2,I_3)$ on $H$
satisfying $2g(I_rX,Y) = d\eta_r(X,Y)$, for $r=1,2,3$ and for any $X,Y\in H$.
Let us consider the local
vector fields $\{\xi_1,\xi_2,\xi_3\}$ on $M$ satisfying
\eqref{bi1} for $n\geq 1$ \cite{D}. Using these vector fields $\xi_r$, we extend the metric $g$ on $H$
to a metric on $M$ (that we also write with the same letter $g$) by requiring
$span\{\xi_1,\xi_2,\xi_3\}=V\perp H$  and
$g(\xi_r,\xi_k)=\delta_{rk}$.

Since the Biquard connection $\nabla$ on $M$ is metric, it is related to the
Levi-Civita connection
$\nabla^g$ of the metric $g$ on $M$ by
\begin{equation}  \label{lcbi}
g(\nabla_AB,C)=g(\nabla^g_AB,C)+\frac12\Big[
g(T(A,B),C)-g(T(B,C),A)+g(T(C,A),B)\Big],
\end{equation}
where $A,B,C$ are arbitrary vector fields on $M$ and $T$ is the torsion of $\nabla$.

Let $R=[\nabla,\nabla]-\nabla_{[\ ,\ ]}$ be the curvature tensor of
$\nabla$. We denote the curvature tensor
of type (0,4) by the same letter, $R(A,B,C,D)=g(R(A,B)C,D)$, for any vector fields
$A,B,C,D$ on $M$. The \emph{qc-Ricci
$2$-forms $\rho_r$ $(r=1,2,3)$ and the \emph{normalized qc-scalar curvature} $S$} of the Biquard connection
are
defined by
\begin{equation}  \label{qscs}
4n\rho_{r}(A,B)=R(A,B,e_a,I_re_a), \quad 8n(n+2)S=R(e_b,e_a,e_a,e_b),
\end{equation}
where $\{e_1,\dots,e_{4n}\}$
is a local orthonormal basis of the distribution $H$.

Regarding the torsion
endomorphism $T_{\xi}=T(\xi,\cdot) : H\rightarrow H$, $ \xi\in V$,
 Biquard shows in
\cite{Biq1} that it is completely trace-free, i.e.
$tr\, T_{\xi}=tr\, T_{\xi}\circ
I_r=0$, and for $7$-dimensional qc manifolds the skew-symmetric part
of
$T_{\xi}: H\rightarrow H$ vanishes, so
$T_{\xi}$ only has symmetric part.

Now,
we consider the 2-tensor $T^0$ on $H$ defined by
$$
T^0(X,Y)=
g((T_{\xi_1}I_1+T_{\xi_2}I_2+T_{ \xi_3}I_3)X,Y),
$$
for $X,Y\in H$. In \cite{IMV} it is proved
\begin{equation}  \label{propt}
\begin{aligned}
T^0(X,Y)+T^0(I_1X,I_1Y)+T^0(I_2X,I_2Y)+T^0(I_3X,I_3Y)=0. \\
\end{aligned}
\end{equation}
Moreover, taking into account \cite[Proposition~2.3]{IV},
on a  {\em seven dimensional}
qc manifold, the torsion endomorphism satisfies the following relations
\begin{equation}  \label{need}
4g(T_{\xi_r}(I_rX),Y)=T^0(X,Y)-T^0(I_rX,I_rY), \quad r=1,2,3.
\end{equation}
In order to determine the torsion endomorphism of the Biquard connection on M, we need
know the differential 1-forms $\alpha_r$
such that
$$
\nabla I_i=-\alpha_j\otimes I_k+\alpha_k\otimes I_j,\quad
\nabla\xi_i=-\alpha_j\otimes\xi_k+\alpha_k\otimes\xi_j,
$$
where from now on $(i,j,k)$ is an arbitrary cyclic permutation of  $(1,2,3)$. 
The $1$-forms $\alpha_r$ are called the $sp(1)$-connection forms. Biquard
in \cite{Biq1} shows that
on $H$ they are expressed
by
\begin{gather}  \label{coneforms}
\alpha_i(X)=d\eta_k(\xi_j,X)=-d\eta_j(\xi_k,X), \quad X\in H, \quad
\xi_i\in V,
\end{gather}
while
on the distribution $V$
they are given by (see \cite{IMV})
\begin{gather}  \label{coneform1}
\alpha_i(\xi_s)=d\eta_s(\xi_j,\xi_k)-\
\delta_{is}\left(\frac{S}2\ +\ \frac12\,\left(\,
d\eta_1(\xi_2,\xi_3)\ +\ d\eta_2(\xi_3,\xi_1)\ +\
d\eta_3(\xi_1,\xi_2)\right)\right),
\end{gather}
where  $S$ is the \emph{normalized} qc scalar curvature defined
by \eqref{qscs}. We notice that in  \cite{IMV} it is proved that
the vanishing of the $sp(1)$-connection
$1$-forms on $H$ implies the vanishing of the torsion endomorphism of
the Biquard connection.

The qc Ricci 2-forms are determined by the
$sp(1)$-connection 1-forms $\alpha_r$ as follows
\begin{equation}  \label{sp1curv}
2\rho_k(A,B)=(d\alpha_k+\alpha_i\wedge\alpha_j)(A,B),
\end{equation}
for any vector fields $A,B$ on $M$.
Moreover (see below \eqref{sixtyfour}), the qc Ricci forms
restricted to $H$ can be expressed in terms of the endomorphism
torsion of the Biquard connection.
We collect the necessary facts from
\cite[Theorem~4.3.5]{IV2}
for $7$-dimensional qc manifolds, so the torsion
endomorphism $T_{\xi}$ only has symmetric part.

\begin{theorem}\label{sixtyseven}
\cite{IMV} On a $7$-dimensional qc manifold $(M,\eta,\mathbb{Q})$ the following
formulas hold:
\begin{equation} \label{sixtyfour}
\begin{aligned}
\rho_{r}(X,Y) \ & =\
\frac12\Bigl[T^0(X,I_{r}Y)-T^0(I_{r}X,Y)\Bigr]-S\omega_{r}(X,Y),\\
T(\xi_{i},\xi_{j})& =-S\xi_{k}-[\xi_{i},\xi_{j}]_{H}, \qquad S\  =\
-g(T(\xi_1,\xi_2),\xi_3),\\
\end{aligned}
\end{equation}
where $r=1,2,3$ and $X,Y\in H$.
\end{theorem}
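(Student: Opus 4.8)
Since Theorem~\ref{sixtyseven} collects identities established in \cite{IMV,IV2}, the core of a proof is really a pointer to those references; still, the plan is to indicate how each of the two lines is extracted from the properties of the Biquard connection together with the definitions \eqref{qscs}, \eqref{coneforms}, \eqref{coneform1}, \eqref{sp1curv}, all of which are available in dimension seven by \cite{D}. I would treat the second line first, since it is the more elementary. By definition of torsion, $T(\xi_i,\xi_j)=\nabla_{\xi_i}\xi_j-\nabla_{\xi_j}\xi_i-[\xi_i,\xi_j]$, and property iv) of Theorem~\ref{biqcon}, namely $\nabla_A\xi_r=-\alpha_s(A)\xi_t+\alpha_t(A)\xi_s$ for cyclic $(r,s,t)$, shows that $\nabla_{\xi_i}\xi_j-\nabla_{\xi_j}\xi_i$ is a combination of $\xi_1,\xi_2,\xi_3$ whose coefficients are built from the values $\alpha_r(\xi_s)$. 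Substituting \eqref{coneform1} and using $[\xi_i,\xi_j]_V=-\sum_s d\eta_s(\xi_i,\xi_j)\,\xi_s$, the $d\eta$-contributions recombine and cancel: the $\xi_i$- and $\xi_j$-components vanish and only $-S\xi_k$ survives vertically, while the horizontal part is exactly $-[\xi_i,\xi_j]_H$ because $\nabla_A\xi_r\in V$ always. Pairing the resulting identity with $\xi_k$ and using $g(\xi_r,\xi_s)=\delta_{rs}$ and $g([\xi_i,\xi_j]_H,\xi_k)=0$ then gives $S=-g(T(\xi_1,\xi_2),\xi_3)$.

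For the first line I would restrict \eqref{sp1curv} to $X,Y\in H$, obtaining $2\rho_r(X,Y)=d\alpha_r(X,Y)+\alpha_i(X)\alpha_j(Y)-\alpha_i(Y)\alpha_j(X)$, expand $d\alpha_r(X,Y)=X\alpha_r(Y)-Y\alpha_r(X)-\alpha_r([X,Y])$, split $[X,Y]=[X,Y]_H+[X,Y]_V$ and replace $[X,Y]_V$ by $-T(X,Y)$ via property ii) of Theorem~\ref{biqcon}. Every occurrence of $\alpha_r|_H$ is then rewritten through \eqref{coneforms} as $d\eta_k(\xi_j,\cdot)$, and the derivatives of the $d\eta$'s are converted into torsion-endomorphism terms using $\nabla g=0$, $\nabla\sigma\in\Gamma(\mathbb{Q})$ and $d\eta_r(X,Y)=2g(I_rX,Y)$. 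A cleaner route is to pass through the curvature definition \eqref{qscs}: express $d\alpha_r$ by the second structure equation, apply the first Bianchi identity for a connection with torsion (the cyclic sum of $R(X,Y)Z$ equals the cyclic sum of $(\nabla_XT)(Y,Z)+T(T(X,Y),Z)$), and then trace with $I_r$. In either case the symmetric, trace-free endomorphisms $T_{\xi_r}$ — symmetry being the seven-dimensional feature recorded just before the statement — reassemble, via \eqref{need} and \eqref{propt}, into $\frac12\bigl[T^0(X,I_rY)-T^0(I_rX,Y)\bigr]$, while the trace piece contributes $-S\omega_r(X,Y)$.

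The hard part will be the first line: one must verify that after all substitutions nothing beyond $T^0$ and $S$ survives. In the Bianchi-identity approach this rests on the vanishing of the tensor $U$ in dimension seven (recalled in the Introduction) and on the symmetry of $T_\xi$; in the direct approach it rests on carefully bookkeeping the covariant-derivative terms hidden inside $d\alpha_r$ and the mixed components $d\eta_r(\xi_s,\cdot)$. This is a nontrivial but purely algebraic computation, already carried out in \cite{IMV} and \cite[Theorem~4.3.5]{IV2}, so for the present purposes it is enough to invoke those sources.
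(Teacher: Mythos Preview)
The paper does not supply its own proof of Theorem~\ref{sixtyseven}: it is stated as a quotation from \cite{IMV} (and, as noted just above it, from \cite[Theorem~4.3.5]{IV2}), with no accompanying proof environment. Your proposal correctly recognises this and goes further by sketching how the two identities are extracted from the defining properties of the Biquard connection; in that sense you are providing more than the paper does, not less.

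Your outline is sound. The derivation of the second line from the torsion formula, property~iv) of Theorem~\ref{biqcon}, and \eqref{coneform1} is the standard route and works as you describe. For the first line you rightly flag that the algebra is substantial and ultimately defer to \cite{IMV,IV2}; that is exactly what the paper itself does. One small remark: in your direct approach you invoke \eqref{coneforms} to rewrite $\alpha_r|_H$, but the crucial input that isolates $T^0$ and $S$ alone is precisely the seven-dimensional fact $U=0$ together with the symmetry of $T_\xi$, which you mention only in connection with the Bianchi route; make sure it is invoked in the direct computation as well, since without it extra terms would survive.
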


\subsection{Example 1 $(\mu=-1)$
}
Consider the simply connected solvable (non-nilpotent)
Lie group $G_1$ of dimension $7$ whose Lie algebra  is
defined by the equations
\begin{equation}  \label{ex1}
\begin{aligned}
 de^1=&0, \\
 de^2=&(1/2)e^{15}-e^{3 4}+(1/2)e^{4 6}, \\
 de^3=&(1/2)e^{16}+e^{24}-(1/2)e^{45}, \\
 de^4=&-2 e^{1 4}, \\
 de^5=&2 (e^{1 2}+e^{3 4})-e^{4 6}, \\
 de^6=&2 (e^{1 3}+ e^{42})+e^{4 5}, \\
 de^7=&2 (e^{1 4}+e^{2 3})-(1/2)e^{5 6}.
\end{aligned}
\end{equation}
We must notice that this Lie algebra is isomorphic to the Lie
algebra $\mathfrak {g_1}$
defined by \eqref{solvable-examples}
for $\mu=-1$. In fact, considering
the basis
$\{f^j; 1\leq j\leq 7\}$ of  $\mathfrak {g_1}^*$
given by $f^j=e^j$ for $1\leq j\leq 4$, and $f^j=2 e^j$ for $5\leq j\leq 7$,
equations \eqref{solvable-examples} with $\mu=-1$ become \eqref{ex1},
where we write $e^j$ instead of $f^j$.

Let $\{e_j; 1\leq j\leq 7\}$ be the basis of left invariant vector fields on $G_1$ dual
to $\{e^j, 1 \leq j\leq 7\}$. We define a
global qc structure on the Lie group $G_1$ by
\begin{equation}  \label{qc1}
\begin{aligned}
&\eta_1=e^5, \quad \eta_2=e^6, \quad \eta_3=e^7, \quad
\xi_1=e_5, \quad \xi_2=e_6,\quad \xi_3=e_7,\\ &\mathbb
H=Span\{e^1,\dots, e^4\}, \\ &\mathbb
 \omega_1=e^{12}+e^{34}, \quad
\omega_2=e^{13}+e^{42}, \quad \omega_3=e^{14}+e^{23}.
\end{aligned}
\end{equation}
It follows from \eqref{ex1} that the triplet $\{\xi_1=e_5,
\xi_2=e_6, \xi_3=e_7\}$ defined by \eqref{qc1} are vector fields on $G_1$ satisfying
\eqref{bi1}. Therefore,
the qc structure on $G_1$
is integrable, and so the Biquard connection exists.

\begin{theorem}\label{m1}
The left invariant qc structure defined by \eqref{qc1} on $G_1$ is such that
the torsion endomorphism of the Biquard connection is
non-zero, the fundamental $4$-form is closed and the normalized qc scalar
curvature is $S=-\frac{1}{2}$.
\end{theorem}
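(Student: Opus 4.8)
The plan is to compute everything explicitly from the structure equations \eqref{ex1} and the qc data \eqref{qc1}, since the Lie group is a solvmanifold and all tensors involved are left-invariant, hence determined by their values on the Lie algebra. First I would compute the $sp(1)$-connection $1$-forms $\alpha_1,\alpha_2,\alpha_3$. On the horizontal distribution $H=\Span{e^1,\dots,e^4}$ these are read off from \eqref{coneforms}, i.e. $\alpha_i(X)=d\eta_k(\xi_j,X)$ for $X\in H$; plugging in $\eta_r=e^{4+r}$, $\xi_r=e_{4+r}$ and the formulas for $de^5,de^6,de^7$ in \eqref{ex1}, one gets each $\alpha_i|_H$ as an explicit combination of $e^1,\dots,e^4$. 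On the vertical distribution $V$ one uses \eqref{coneform1}, which requires knowing $S$; so in practice I would treat $S$ as an unknown for the moment, determine the $\alpha_i$ up to $S$, and pin down $S$ at the end via the second line of \eqref{sixtyfour}, $S=-g(T(\xi_1,\xi_2),\xi_3)$, where $T(\xi_i,\xi_j)=-[\xi_i,\xi_j]+(\text{correction})$ is computed from the Lie bracket, which in turn is dual to the $de^j$. Reading off $de^7=2e^{14}+2e^{23}-\tfrac12 e^{56}$, the $e^{56}$-coefficient feeds directly into $[e_5,e_6]$ and hence into $S$; I expect this to give $S=-\tfrac12$.

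Next I would verify that the torsion endomorphism is non-zero. Rather than computing $T_\xi$ from \eqref{lcbi} directly, the cleanest route is via the tensor $T^0$: by \eqref{need} and \eqref{sixtyfour}, $T^0$ and the qc-Ricci $2$-forms $\rho_r$ determine each other once $S$ is known, and by \eqref{sp1curv} we have $2\rho_k=d\alpha_k+\alpha_i\wedge\alpha_j$. So I would compute $d\alpha_k+\alpha_i\wedge\alpha_j$ from the explicit $\alpha$'s, restrict to $H$, and compare with $-S\omega_k$; any discrepancy is exactly the torsion term $\tfrac12[T^0(\cdot,I_k\cdot)-T^0(I_k\cdot,\cdot)]$. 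If this is non-zero for some $k$ — which the structure equations, being genuinely non-flat and non-nilpotent, should force — then $T^0\neq0$, hence $T_\xi\neq0$ by \eqref{need}. Alternatively, and perhaps more transparently, one can invoke the observation (used implicitly in Lemma~\ref{lemma:e4} via the remark that the vanishing of the $sp(1)$-connection forms on $H$ forces vanishing torsion): since the $\alpha_i|_H$ computed above are manifestly non-zero, and in fact the relevant combinations do not vanish, the torsion endomorphism is non-zero.

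Finally I would check that the fundamental $4$-form $\Omega=\omega_1\wedge\omega_1+\omega_2\wedge\omega_2+\omega_3\wedge\omega_3$ is closed. With $\omega_1=e^{12}+e^{34}$ etc.\ from \eqref{qc1}, one has $\Omega=2(e^{1234}+e^{1234}+e^{1234})=6e^{1234}$ (up to the precise combinatorial constant), so $d\Omega$ is $6\,d(e^{1234})=6(de^1\wedge e^{234}-e^1\wedge de^2\wedge e^{34}+\dots)$. Using \eqref{ex1}: $de^1=0$, $de^4=-2e^{14}$ contributes a term proportional to $e^{14}\wedge e^{123}=0$, $de^2$ has the piece $-e^{34}$ which wedged with $e^{134}$ vanishes, and the remaining $e^{15},e^{46}$-type terms in $de^2,de^3$ are killed upon wedging with the appropriate three of $e^1,\dots,e^4$; so every term drops and $d\Omega=0$. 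This is the easy part. The main obstacle is bookkeeping: correctly evaluating $d\eta_r(\xi_j,X)$ and the vertical values $\alpha_i(\xi_s)$ with all signs and the cyclic-permutation conventions straight, and consistently carrying the unknown $S$ through until it is fixed. Concretely I would: (1) compute the $\alpha_i$ on $H$; (2) compute $[\xi_i,\xi_j]$ and hence $S$ from \eqref{sixtyfour}; (3) complete the $\alpha_i$ on $V$ via \eqref{coneform1}; (4) compute $2\rho_k|_H=d\alpha_k+\alpha_i\wedge\alpha_j|_H$ and extract $T^0$ from \eqref{sixtyfour}, exhibiting it as non-zero; (5) verify $d\Omega=0$ by the short computation above.
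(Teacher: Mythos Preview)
Your overall plan matches the paper's proof almost exactly: compute the $sp(1)$-connection forms via \eqref{coneforms}--\eqref{coneform1} with $S$ left as a parameter, compute the Ricci $2$-forms $\rho_r$ on $H$ via \eqref{sp1curv}, compare with the first line of \eqref{sixtyfour} to extract both $S$ and $T^0$, and check $d\Omega=0$ directly. Two points need correction, however.

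First, your step (2) is circular. The identity $S=-g(T(\xi_1,\xi_2),\xi_3)$ cannot be used to \emph{determine} $S$, because $T(\xi_1,\xi_2)=\nabla_{\xi_1}\xi_2-\nabla_{\xi_2}\xi_1-[\xi_1,\xi_2]$ and the vertical components $\nabla_{\xi_i}\xi_j$ are governed by the $\alpha_r(\xi_s)$, which by \eqref{coneform1} already contain $S$. If you work it out, the $\xi_3$-component of $\nabla_{\xi_1}\xi_2-\nabla_{\xi_2}\xi_1$ equals $\alpha_1(\xi_1)+\alpha_2(\xi_2)=-d\eta_3(\xi_1,\xi_2)-S$, while the $\xi_3$-component of $[\xi_1,\xi_2]$ is $-d\eta_3(\xi_1,\xi_2)$; the identity collapses to $S=S$. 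The paper instead reads off $S$ in what you call step (4): once you have, say, $\rho_1|_H=-\tfrac12(S-\tfrac12)\omega_1$, comparing with $\rho_1=-S\omega_1+\tfrac12[\cdots]$ forces $-S=-\tfrac12(S-\tfrac12)$, i.e.\ $S=-\tfrac12$. Since you are computing $\rho_r$ anyway, just extract $S$ there.

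Second, your alternative argument for non-vanishing torsion is a converse fallacy: the cited fact is that $\alpha_i|_H=0$ implies $T_\xi=0$, which says nothing when $\alpha_i|_H\neq0$. Stick with your primary route through $T^0$; once $\rho_3|_H$ turns out to have the extra piece $\tfrac12(e^{14}-e^{23})$ beyond $-S\omega_3$, you get $T^0\neq0$ and hence $T_{\xi_r}\neq0$ by \eqref{need}, exactly as the paper does.
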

\begin{proof}
Clearly, \eqref{ex1} and \eqref{qc1} imply that the
fundamental $4$-form $\Omega$ on $G_1$
defined by \eqref{fform} is such that $d\Omega=2 e^{1234}=0$.
The closedness of $\Omega$ can also be seen as a consequence of the 
fact that the vertical distribution is integrable. Indeed,  a result 
in~\cite[Theorem~4.7]{AFISV} states
that {\em for a qc structure in dimension $7$, the fundamental four form 
is closed if and only if the vertical distribution is integrable}.

We determine the connection $1$-forms $\alpha_{r}$ of the Biquard connection on $G_1$.
The  structure equations
\eqref{ex1} together with \eqref{coneforms} and \eqref{coneform1}
imply
\begin{equation}  \label{ex1conf}
\alpha_{1}=-\frac{1}{2} (S-\frac{1}{2}) e^5,\quad\alpha_{2}=-\frac{1}{2} (S-\frac{1}{2}) e^6,
\quad\alpha_{3}=-e^4-\frac{1}{2} (S+\frac{1}{2})e^7.
\end{equation}
Now, \eqref{sp1curv}, \eqref{ex1} and \eqref{ex1conf} yield
\begin{equation}  \label{rtor1}
\begin{aligned}
\rho_1(X,Y)&= -\frac{1}{2}(S-\frac{1}{2}) \omega_1(X,Y),\\
\rho_2(X,Y)&= -\frac{1}{2}(S-\frac{1}{2}) \omega_2(X,Y), \\
\rho_3(X,Y)&=e^{14}(X,Y)-\frac{1}{2}(S+\frac{1}{2}) \omega_3(X,Y) \\
&=\frac{1}{2}(e^{14}-e^{23})(X,Y)-\frac{1}{2}(S-\frac{1}{2}) \omega_3(X,Y),
\end{aligned}
\end{equation}
for $X,Y\in H$. Comparing \eqref{rtor1} with \eqref{sixtyfour} we conclude
$$
\begin{aligned} T^0(X,I_1Y)-T^0(I_1X,Y)=0, \qquad S=-\frac{1}{2},\\
\quad T^0(X,I_2Y)-T^0(I_2X,Y)=0,\\
\quad T^0(X,I_3Y)-T^0(I_3X,Y)= (e^{14}-e^{23})(X,Y),
\end{aligned}
$$
or, equivalently,
\begin{equation}  \label{tr1-1}
\begin{aligned}
T^0(I_1X,I_1Y)+T^0(X,Y)=0, \qquad S=-\frac{1}{2},\\
\quad T^0(I_2X,I_2Y)+T^0(X,Y)=0,\\
\quad T^0(I_3X,I_3Y)+T^0(X,Y)= - (e^{14}-e^{23})(X,I_3Y).
\end{aligned}
\end{equation}
From equations \eqref{tr1-1} and \eqref{need} we have $T_{\xi_3}=0$ and
\begin{equation}  \label{tor-endom-1}
T^0(X,Y)=-\frac{1}{2}(e^{14}-e^{23})(X,I_3Y),\\
\quad
g(T(\xi_{r},X),Y)=\frac{1}{4}(e^{14}-e^{23})(I_{r}X,I_3Y),
\end{equation}
for $r=1,2$. Equations \eqref{tor-endom-1} imply that the endomorphism torsion is non-zero.
In fact,
we have $T(e_5,e_1)=T_{\xi_1}(e_1)=-\frac{1}{4}e_2\not=0$
which completes the proof.
\end{proof}

Now, following \cite{IV},
we consider the {\em qc conformal curvature tensor} $W^{qc}$ of a
seven dimensional qc manifold with distribution $H$, that is,
the tensor on $H$ of type $(0,4)$ given by
\begin{multline}  \label{qcwdef1}
W^{qc}(X,Y,Z,V) =
R(X,Y,Z,V)+ (g\owedge L_0))(X,Y,Z,V)+
\sum_{s=1}^3(\omega_s
\owedge I_sL_0)(X,Y,Z,V) \\
-\frac12\sum_{s=1}^3\Bigl[\omega_s(X,Y)\Bigl\{T^0(Z,I_sV)-T^0(I_sZ,V)\Bigr\}
+ \omega_s(Z,V)\Bigl\{T^0(X,I_sY)-T^0(I_sX,Y)
\Bigr\}\Bigr] \\
+\frac{S}4\Big[(g\owedge g)(X,Y,Z,V)+\sum_{s=1}^3\Bigl((\omega_s\owedge
\omega_s)(X,Y,Z,V) +4\omega_s(X,Y)\omega_s(Z,V)\Bigr) \Big],
\end{multline}
where $X,Y,Z,V \in H$, $L_0=\frac{1}{2}T^0$, $I_s L_0\, (X,Y) = -L_0 (X,I_s Y)$ $(s=1,2,3)$,
and $\owedge$ is the
Kulkarni-Nomizu product of $2$-tensors, which is defined as follows. If $\mu$ and $\nu$
are $2$-tensors on $H$, then $\mu\owedge \nu$ is the $4$-tensor given by
\begin{multline} \label{Kulkarni-Nomizu}
(\mu\owedge \nu)(X,Y,Z,V):=\mu(X,Z)\nu(Y,V)+
\mu(Y,V)\nu(X,Z)-\mu(Y,Z)\nu(X,V)\\
-\mu(X,V)\nu(Y,Z),
\end{multline}
for any $X,Y,Z,V\in H$.

The tensor $W^{qc}$ is the obstruction for a qc structure to be locally qc
conformal to the flat structure on the quaternionic Heisenberg group.

\begin{theorem} \label{main1} \cite[Theorem~4.4]{IV} A qc structure on a
smooth manifold of dimension $4n+3$ is locally qc conformal to the standard flat qc
structure on the quaternionic Heisenberg group
$G(\mathbb{H})$ if and only if $W^{qc}=0$. In this case,
the qc structure is said to be a qc conformally flat structure.
\end{theorem}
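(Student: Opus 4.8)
The plan is to prove the two implications of the equivalence separately, treating $W^{qc}$ as the quaternionic contact analogue of the Weyl conformal tensor.

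\emph{Necessity.} First I would record how the data attached to the qc structure transform under a qc conformal rescaling $\bar\eta_r=\sum_s\Psi_{rs}\,\mu\,\eta_s$, with $\mu$ a positive function and $\Psi=(\Psi_{rs})$ an $SO(3)$-valued map. The Biquard connection $\nabla$, its curvature $R$, the tensor $T^0$, and the normalized qc-scalar curvature $S$ all change by explicit formulas whose inhomogeneous parts are built from the horizontal gradient and horizontal Hessian of $\log\mu$, in exact parallel with the behaviour of the Schouten tensor in Riemannian conformal geometry. The heart of this direction is then the mechanical verification that, when these transformation laws are substituted into \eqref{qcwdef1}, every inhomogeneous contribution cancels, so that $W^{qc}$ only acquires the conformal weight forced by $g\mapsto\mu^{-2}g$: one follows in turn the Kulkarni--Nomizu pieces $g\owedge L_0$ and $\omega_s\owedge I_sL_0$, the terms of \eqref{qcwdef1} built from $\omega_s$ and $T^0$, and the $S$-term, matching the inhomogeneous parts piece by piece (the analogue of the classical computation of conformal invariance of the Weyl tensor; in dimension $>7$ the $U$-term enters in the same way). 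Granting conformal invariance, one observes that the standard structure on $G(\mathbb H)$ has flat Biquard connection, so by \eqref{sixtyfour} also $T^0=0$ and $S=0$, and the right-hand side of \eqref{qcwdef1} vanishes identically there; hence $W^{qc}=0$ on any structure that is locally qc conformal to $G(\mathbb H)$.

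\emph{Sufficiency.} This is the substantial direction. Assume $W^{qc}=0$. The strategy is to absorb $T^0$ and $S$ into a qc conformal change and then recognize the rescaled structure as the flat model. First, $W^{qc}=0$ already expresses the full Biquard curvature $R$ algebraically in terms of $T^0$ and $S$ via \eqref{qcwdef1}; differentiating this identity and using the first and second Bianchi identities for $\nabla$, together with the relations of Theorem~\ref{sixtyseven}, produces the differential identities among the covariant derivatives of $T^0$ and $S$ that serve as integrability conditions. Next, one sets up the overdetermined system for a qc conformal factor $\mu$ that forces $\bar T^0\equiv 0$ and $\bar S\equiv 0$ — a qc Yamabe-type normalization — and shows that its integrability conditions reduce, after use of the Bianchi-derived identities, exactly to $W^{qc}=0$, so that the system is locally solvable. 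Finally, for the rescaled structure $\bar T^0=0$ and $\bar S=0$, whence by \eqref{sixtyfour} the qc-Ricci forms vanish on $H$ and the Biquard connection becomes flat; a developing-map (Frobenius) argument — or Biquard's local normal form for qc structures with flat Biquard connection — then identifies it with the standard flat qc structure on $G(\mathbb H)$, and so the original structure is locally qc conformal to $G(\mathbb H)$.

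\emph{An alternative, and conceptually cleaner, route} is through parabolic geometry. A qc structure (integrable when $n=1$) is equivalent to a regular normal Cartan geometry of type $(\mathrm{Sp}(n+1,1),P)$, whose homogeneous model $\mathrm{Sp}(n+1,1)/P$ is the boundary sphere $S^{4n+3}$, identified via the quaternionic Cayley transform with $G(\mathbb H)$ together with a point at infinity, so that ``locally flat'' and ``locally qc conformal to $G(\mathbb H)$'' coincide. One checks that the harmonic curvature of this parabolic geometry — the lowest-homogeneity part of the Cartan curvature — is precisely $W^{qc}$ in dimension $7$ (for $n>1$ there is in addition a torsion component, controlled, as in \cite{IV}, by the same data together with the $U$-term), and then invokes the general principle that vanishing of the harmonic curvature forces the whole Cartan curvature to vanish, hence local isomorphism with the model. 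Either way, the genuine obstacle is the sufficiency direction: pinning down $W^{qc}$ as \emph{exactly} the obstruction, which amounts either to the delicate integrability analysis of the normalization system or, in the Cartan picture, to the construction of the canonical Cartan connection and the normalization of its curvature — and here I would follow Ivanov and Vassilev \cite{IV}.
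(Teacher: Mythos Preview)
The paper does not prove this theorem: it is stated as a citation of \cite[Theorem~4.4]{IV} and used as a black box in the subsequent propositions to conclude that the qc structures on $G_1$ and $G_2$ are not locally qc conformally flat. There is therefore no ``paper's own proof'' to compare your proposal against.

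That said, your sketch is a faithful outline of the strategy Ivanov and Vassilev actually carry out in \cite{IV}: they first establish the qc conformal invariance of $W^{qc}$ by explicit computation of the transformation laws, and for the converse they exploit the Bianchi identities to show that $W^{qc}=0$ forces the full curvature to vanish after a suitable conformal change, then invoke a flatness/developing argument. Your alternative via parabolic geometry is also correct in spirit and is the conceptual framework underlying the result. For the purposes of the present paper, however, no proof is required here --- only the citation.
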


\begin{proposition}
The left invariant qc structure defined by \eqref{qc1} on $G_1$
is not locally qc conformally flat.
\end{proposition}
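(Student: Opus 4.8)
The plan is to apply Theorem~\ref{main1}: it suffices to exhibit values $X,Y,Z,V\in H$ for which the qc conformal curvature tensor $W^{qc}(X,Y,Z,V)$, as defined by \eqref{qcwdef1}, is non-zero. Since everything in sight is left invariant, I only need to work with the constant tensors on $H=\Span{e^1,\dots,e^4}$, and all the ingredients have already been computed in the proof of Theorem~\ref{m1}: we have $S=-\tfrac12$, the tensor $T^0(X,Y)=-\tfrac12(e^{14}-e^{23})(X,I_3Y)$, hence $L_0=\tfrac12 T^0$, and the Ricci $2$-forms $\rho_r$ from \eqref{rtor1}. The only piece not yet explicitly recorded is the full curvature tensor $R(X,Y,Z,V)$ of the Biquard connection on $H$, which I would compute directly from the structure equations \eqref{ex1} together with \eqref{lcbi} (or, more efficiently, from $R=d\omega+\omega\wedge\omega$ for the Biquard connection matrix, using the known torsion and the $sp(1)$-forms \eqref{ex1conf}).

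The key steps, in order: (1) write down the Biquard connection $1$-forms on $H$ explicitly — the $sp(1)$-part is governed by the $\alpha_r$ of \eqref{ex1conf} via $\nabla I_i=-\alpha_j\otimes I_k+\alpha_k\otimes I_j$, and the $sp(n)=sp(1)$-part is determined by metricity and the torsion constraint (iii) of Theorem~\ref{biqcon}, using the torsion values $T(\xi_r,\cdot)$ from \eqref{tor-endom-1}; (2) compute the restriction $R|_{\Lambda^2 H}$ of the curvature; (3) substitute into \eqref{qcwdef1} along with $L_0$, the $\omega_s$, and $S=-\tfrac12$, and simplify; (4) pick a convenient quadruple, e.g. $(X,Y,Z,V)=(e_1,e_2,e_1,e_2)$ or $(e_1,e_4,e_1,e_4)$, and check that the resulting scalar is non-zero. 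Because $W^{qc}$ is built precisely so that its trace parts vanish, one should be a little careful to choose a quadruple on which the "interesting'' (Weyl-type) part survives; testing a few components on the $e^{14}-e^{23}$ directions, which is where $T^0$ is concentrated, is the natural thing.

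I expect the main obstacle to be the bookkeeping in step (2): assembling the full Biquard curvature on $H$ from \eqref{lcbi} is a genuine, if routine, computation, and one must be careful to use the correct (symmetric, trace-free) torsion endomorphisms $T_{\xi_r}$ rather than just the $2$-forms $\rho_r$, since $W^{qc}$ depends on $R$ itself and not only on its Ricci contractions. A cleaner route that avoids recomputing $R$ from scratch is to instead use the known expression of $W^{qc}$ in terms of curvature and torsion from \cite{IV}: since $S$ and $T^0$ are already in hand, one can read off enough components of $W^{qc}$ to conclude non-vanishing, provided the relevant curvature components are extracted. Either way, once a single non-zero component is produced, Theorem~\ref{main1} finishes the proof. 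One should also remark that this is consistent with Theorem~\ref{m1}: the torsion endomorphism is non-zero, so there is no a priori reason for $W^{qc}$ to vanish, and the explicit computation confirms it does not.
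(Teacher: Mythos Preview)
Your plan is correct and is exactly the paper's approach: evaluate $W^{qc}$ on a single quadruple in $H$ and invoke Theorem~\ref{main1}. The paper uses $(e_1,e_2,e_1,e_2)$, where in fact all the $L_0$- and $T^0$-terms in \eqref{qcwdef1} drop out (so your heuristic of probing the ``$e^{14}-e^{23}$ directions'' is unnecessary), and the computation reduces to $R(e_1,e_2,e_1,e_2)+2S=\tfrac12+2(-\tfrac12)\cdot 1=-\tfrac12\neq0$, with $R(e_1,e_2,e_1,e_2)=\tfrac12$ obtained directly from \eqref{lcbi} and \eqref{ex1}.
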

\begin{proof}
Using \eqref{tor-endom-1}  and \eqref{Kulkarni-Nomizu} we 
see that the tensor $W^{qc}$ given by \eqref{qcwdef1} satisfies
\begin{multline} \label{wqc-1}
W^{qc}(e_1,e_2,e_1,e_2)=R(e_1,e_2,e_1,e_2)+\frac{S}4\Big[(g\owedge g)(e_1,e_2,e_1,e_2)\\
+\sum_{r=1}^3\Bigl((\omega_r\owedge
\omega_r)(e_1,e_2,e_1,e_2) +4\omega_r(e_1,e_2)\omega_r(e_1,e_2)\Bigr) \Big],
\end{multline}
since
other terms on the right hand
side of \eqref{qcwdef1} vanish on the
quadruplet $(e_1,e_2,e_1,e_2)$.
It is straightforward to check from \eqref{lcbi}, \eqref{ex1}, \eqref{qc1}, 
\eqref{tor-endom-1} and \eqref{Kulkarni-Nomizu} that
$$
\begin{aligned}
 R(e_1,e_2,e_1,e_2)=\frac{1}{2}, \quad (g\owedge g)(e_1,e_2,e_1,e_2)=2, \qquad
 \\
 \sum_{s=1}^{3}((\omega_{s}\owedge \omega_{s})(e_1,e_2,e_1,e_2)+4\omega_{s}(e_1,e_2)\omega_{s}(e_1,e_2))=6.
\end{aligned}
$$
\noindent Substituting these equalities in \eqref{wqc-1} we obtain
$W^{qc}(e_1,e_2,e_1,e_2)=-\frac{1}{2}\not=0$, which completes the proof
according to Theorem \ref{main1}.
\end{proof}
\subsection{Example 2  $(\mu=-\frac{1}{3})$
}
Next, we consider the simply connected solvable (non-nilpotent) Lie group $G_2$
of dimension $7$ whose Lie algebra is
defined by
\begin{equation}  \label{ex2}
\begin{aligned}
 de^1=&0, \\
 de^2=&\frac{2}{3}e^{12}+\frac{1}{6}e^{15}-\frac{1}{3}e^{34}+\frac{1}{6}e^{46}, \\
 de^3=&-\frac{2}{3}e^{13}+\frac{1}{6}e^{16} -e^{24}-\frac{1}{6}e^{45}, \\
 de^4=&-\frac{2}{3}e^{14}, \\
 de^5=&2 (e^{1 2}+e^{3 4})-e^{46}, \\
 de^6=&2(e^{1 3}+e^{4 2})+e^{45}, \\
de^{7}=& 2(e^{14}+e^{23})-\frac{1}{6}e^{56}.
\end{aligned}
\end{equation}
This Lie algebra  is isomorphic to the Lie
algebra $\mathfrak {g_2}$
obtained in Proposition $3.1$ for $\mu=-1/3$. Indeed, taking the basis
$\{f^j; 1\leq j\leq 7\}$ of  $\mathfrak {g_2}^*$
defined by $f^j=e^j$ for $1\leq j\leq 4$, and $f^j=2 e^j$ for $5\leq j\leq 7$,
equations \eqref{solvable-examples} with $\mu=-1/3$ become \eqref{ex2},
where we write $e^j$ instead of $f^j$.

Let $\{e_j; 1\leq j\leq 7\}$ be the basis of left invariant vector fields on $G_2$ dual
to $\{e^j, 1 \leq j\leq 7\}$. We define a
global qc structure on the Lie group $G_2$ by
\begin{equation}  \label{qc2}
\begin{aligned}
&\eta_1=e^5, \quad \eta_2=e^6, \quad \eta_3=e^7, \quad
\xi_1=e_5,\quad \xi_2=e_6,\quad \xi_3=e_7,\\ &\mathbb
H=Span\{e^1,\dots, e^4\}, \\ &\mathbb
\omega_1=e^{12}+e^{34}, \quad
\omega_2=e^{13}+e^{42} \quad \omega_3=e^{14}+e^{23}.
\end{aligned}
\end{equation}
From \eqref{ex2} we have that the triplet $\{\xi_1=e_5,
\xi_2=e_6, \xi_3=e_7\}$ of vector fields on $G_2$ defined by \eqref{qc2}
satisfy \eqref{bi1}. Therefore the Biquard connection does exist.

\begin{theorem}\label{m2}
The left invariant qc
structure
defined by \eqref{qc2}
on the simply connected solvable Lie
group $G_2$ is such that
the  torsion endomorphism of the Biquard connection is non-zero,
the fundamental $4$-form is closed and the normalized qc scalar
curvature is $S=-\frac{1}{6}$.
\end{theorem}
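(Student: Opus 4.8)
The plan is to follow the proof of Theorem~\ref{m1} line by line, since everything is entirely parallel. First I would check that the fundamental $4$-form $\Omega$ of \eqref{fform} attached to the $\omega_r$ in \eqref{qc2} is closed: with $\omega_1=e^{12}+e^{34}$, $\omega_2=e^{13}+e^{42}$, $\omega_3=e^{14}+e^{23}$ one has $\omega_r\wedge\omega_r=2e^{1234}$, so $\Omega=6e^{1234}$, and $d(e^{1234})=0$ is immediate from \eqref{ex2}. Equivalently, \eqref{ex2} shows that $de^1,\dots,de^4$ have no component in $\Span{e^{56},e^{57},e^{67}}$, so the vertical distribution $\Span{e_5,e_6,e_7}$ is integrable, and the closedness of $\Omega$ then also follows from \cite[Theorem~4.7]{AFISV}.

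Next I would compute the $sp(1)$-connection $1$-forms, keeping the normalized qc scalar curvature $S$ undetermined. Applying \eqref{coneforms} and \eqref{coneform1} to \eqref{ex2} and \eqref{qc2}, the expected output is
\[\alpha_1=-\tfrac12\bigl(S-\tfrac16\bigr)e^5,\qquad \alpha_2=-\tfrac12\bigl(S-\tfrac16\bigr)e^6,\qquad \alpha_3=-e^4-\tfrac12\bigl(S+\tfrac16\bigr)e^7.\]
Feeding these into \eqref{sp1curv} and restricting to $H$ should give $\rho_1|_H=-\tfrac12\bigl(S-\tfrac16\bigr)\omega_1$, $\rho_2|_H=-\tfrac12\bigl(S-\tfrac16\bigr)\omega_2$ and $\rho_3|_H=\tfrac16(e^{14}-e^{23})-\tfrac12\bigl(S-\tfrac16\bigr)\omega_3$.

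Comparing these with the first identity in \eqref{sixtyfour} and rewriting via the substitution $Y\mapsto I_rY$ (exactly as in the proof of Theorem~\ref{m1}), I would then obtain
\[T^0(I_rX,I_rY)+T^0(X,Y)=-\bigl(S+\tfrac16\bigr)g(X,Y)\qquad(r=1,2),\]
\[T^0(I_3X,I_3Y)+T^0(X,Y)=-\tfrac13(e^{14}-e^{23})(X,I_3Y)-\bigl(S+\tfrac16\bigr)g(X,Y).\]
Taking the trace over $H$ of the $r=1$ relation, and using that $T^0$ is trace-free while each $I_r$ is a $g$-isometry, forces $S=-\tfrac16$; then \eqref{propt} yields $T^0(I_3X,I_3Y)=T^0(X,Y)$, hence $T^0(X,Y)=-\tfrac16(e^{14}-e^{23})(X,I_3Y)$. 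Finally I would substitute $S=-\tfrac16$ and this $T^0$ into \eqref{need} to conclude $T_{\xi_3}=0$ and $g(T(\xi_r,X),Y)=\tfrac1{12}(e^{14}-e^{23})(I_rX,I_3Y)$ for $r=1,2$, so that $T(e_5,e_1)=T_{\xi_1}(e_1)=-\tfrac1{12}e_2\neq0$; this shows the torsion endomorphism of the Biquard connection is non-zero and finishes the proof.

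I do not expect any genuine obstacle here: every step mirrors Example~1, so the only real work is the (routine but error-prone) bookkeeping that produces the $\alpha_r$ and the $\rho_r$ above. The one conceptual point is that \eqref{sixtyfour} does not determine $S$ and $T^0$ separately; this is handled, as indicated, by tracing the $r=1$ relation to pin down $S=-\tfrac16$ and then invoking \eqref{propt} to recover $T^0$.
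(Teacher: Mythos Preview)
Your proposal is correct and follows essentially the same approach as the paper's proof: closedness of $\Omega$ via integrability of the vertical distribution (with the direct argument you give as a bonus), the same formulas for $\alpha_r$ and $\rho_r|_H$, comparison with \eqref{sixtyfour}, and the same torsion computation yielding $T_{\xi_1}(e_1)=-\tfrac1{12}e_2$. The only difference is presentational: the paper simply asserts $S=-\tfrac16$ alongside $T^0(X,I_1Y)-T^0(I_1X,Y)=0$ from the comparison, while you make the underlying reason explicit by tracing the relation $T^0(I_1X,I_1Y)+T^0(X,Y)=-(S+\tfrac16)g(X,Y)$ and using that $T^0$ is trace-free.
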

\begin{proof}
Using \eqref{ex2}, \eqref{qc2} and 
Theorem~4.7 in ~\cite{AFISV} we get that the
fundamental $4$-form $\Omega$ on $G_2$,
defined by \eqref{fform}, is closed since the vertical distribution
of the qc structure
is integrable.

On the other hand, the  structure equations
\eqref{ex2} together with \eqref{coneforms} and \eqref{coneform1}
imply
\begin{equation}  \label{ex2conf}
\alpha_{1}=-\frac{1}{2} (S-\frac{1}{6}) e^5,\quad\alpha_{2}=-\frac{1}{2} (S-\frac{1}{6}) e^6,
\quad\alpha_{3}=-e^4-\frac{1}{2} (S+\frac{1}{6})e^7.
\end{equation}
Now, from \eqref{sp1curv}, \eqref{ex2} and \eqref{ex2conf}, we get
\begin{equation}  \label{rtor2}
\begin{aligned}
\rho_1(X,Y)= -\frac{1}{2}(S-\frac{1}{6}) \omega_1(X,Y),\\
\rho_2(X,Y)= -\frac{1}{2}(S-\frac{1}{6}) \omega_2(X,Y),\\
\rho_3(X,Y)=-\frac{1}{2}(S-\frac{1}{6}) \omega_3(X,Y)+\frac{1}{6} (e^{14}-e^{23})(X,Y),
\end{aligned}
\end{equation}
for $X,Y\in H$. Comparing \eqref{rtor2} with \eqref{sixtyfour} we get
$$
\begin{aligned} T^0(X,I_1Y)-T^0(I_1X,Y)=0, \qquad S=-\frac{1}{6},\\
\quad T^0(X,I_2Y)-T^0(I_2X,Y)=0,\\
\quad T^0(X,I_3Y)-T^0(I_3X,Y)= \frac{1}{3}(e^{14}-e^{23})(X,Y),
\end{aligned}
$$
or, equivalently,
\begin{equation}  \label{tr1-2}
\begin{aligned}
T^0(I_1X,I_1Y)+T^0(X,Y)=0, \\
\quad T^0(I_2X,I_2Y)+T^0(X,Y)=0,\\
\quad T^0(I_3X,I_3Y)+T^0(X,Y)= -\frac{1}{3} (e^{14}-e^{23})(X,I_3Y),
\end{aligned}
\end{equation}
for $X,Y\in H$. From equations \eqref{tr1-2} and taking into account \eqref{propt} and \eqref{need},
we have $T_{\xi_3}=0$ and
\begin{equation}  \label{tor-endom-2}
T^0(X,Y)=-\frac{1}{6}(e^{14}-e^{23})(X,I_3Y), \\
\quad
g(T(\xi_r,X),Y)=\frac{1}{12}(e^{14}-e^{23})(I_rX,I_3Y),
\end{equation}
for $r=1,2$. Equations \eqref{tor-endom-2} imply that the endomorphism torsion is non-zero.
In fact, $T(e_5,e_1)=T_{\xi_1}(e_1)=-\frac{1}{12}e_2\not=0$.
 \end{proof}

\begin{proposition}
The left invariant qc structure defined by \eqref{qc2} on $G_2$
is not locally qc conformally flat.
\end{proposition}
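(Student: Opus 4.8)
The plan is to argue exactly as in the proof of the corresponding proposition for $G_1$, replacing the data of Example~$1$ by that of Example~$2$: one evaluates the qc conformal curvature tensor \eqref{qcwdef1} on the quadruplet $(e_1,e_2,e_1,e_2)$ and checks that the result is nonzero. By Theorem~\ref{m2} the normalized qc scalar curvature is $S=-\tfrac16$ and the torsion is given by \eqref{tor-endom-2}; since $T^0$ is again a scalar multiple of $(e^{14}-e^{23})(X,I_3Y)$, exactly the same cancellations as for $G_1$ take place on this quadruplet. Concretely, $\omega_s(e_1,e_1)=\omega_s(e_2,e_2)=0$ forces the Kulkarni--Nomizu term $\sum_s(\omega_s\owedge I_sL_0)$ to reduce to a multiple of $T^0(e_1,I_se_2)-T^0(e_2,I_se_1)$, which vanishes by the relations displayed just before \eqref{tr1-2} together with the symmetry of $T^0$; moreover $(g\owedge L_0)(e_1,e_2,e_1,e_2)=\tfrac12\bigl(T^0(e_1,e_1)+T^0(e_2,e_2)\bigr)=0$ by the first relation in \eqref{tr1-2}; and the remaining $T^0$-correction terms vanish because $\omega_3(e_1,e_2)=0$ while $T^0(X,I_rY)-T^0(I_rX,Y)=0$ for $r=1,2$. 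Hence
\[
W^{qc}(e_1,e_2,e_1,e_2)=R(e_1,e_2,e_1,e_2)+\frac{S}{4}\Bigl[(g\owedge g)(e_1,e_2,e_1,e_2)+\sum_{s=1}^3\bigl((\omega_s\owedge\omega_s)(e_1,e_2,e_1,e_2)+4\,\omega_s(e_1,e_2)^2\bigr)\Bigr].
\]

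Next I would use the identities $(g\owedge g)(e_1,e_2,e_1,e_2)=2$ and $\sum_s\bigl((\omega_s\owedge\omega_s)(e_1,e_2,e_1,e_2)+4\omega_s(e_1,e_2)^2\bigr)=6$, which follow from \eqref{Kulkarni-Nomizu} together with $\omega_1(e_1,e_2)=1$ and $\omega_2(e_1,e_2)=\omega_3(e_1,e_2)=0$; these are formal and independent of the Lie algebra, exactly as for $G_1$. Thus $W^{qc}(e_1,e_2,e_1,e_2)=R(e_1,e_2,e_1,e_2)+2S=R(e_1,e_2,e_1,e_2)-\tfrac13$, and it remains only to compute $R(e_1,e_2,e_1,e_2)$ for the Biquard connection of \eqref{qc2} on $G_2$. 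I would obtain it by first writing the Levi-Civita connection $\nabla^g$ of the left-invariant metric $g=\sum_{i=1}^7(e^i)^2$ from the Koszul formula and the structure equations \eqref{ex2}, then passing to the Biquard connection $\nabla$ via \eqref{lcbi} and the torsion \eqref{tor-endom-2}, and finally reading off the $(1,2,1,2)$-component of its curvature; since the resulting value is different from $\tfrac13$, one gets $W^{qc}(e_1,e_2,e_1,e_2)\neq0$, and Theorem~\ref{main1} then shows that the qc structure on $G_2$ is not locally qc conformally flat.

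The only genuinely laborious step is this last curvature computation, the analogue of the one carried out for $G_1$: the structure constants of $\lie{g}_2$ are less sparse, so the main care is needed in bookkeeping the torsion correction terms of \eqref{lcbi} when assembling $\nabla$; everything else is either a direct transcription of the $G_1$ argument or a Lie-algebra-independent identity.
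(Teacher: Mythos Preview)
Your approach is exactly the paper's: reduce $W^{qc}(e_1,e_2,e_1,e_2)$ to $R(e_1,e_2,e_1,e_2)+2S$ by showing the $T^0$-terms drop out on this quadruplet, plug in $S=-\tfrac16$, and compare with the curvature component. The paper does precisely this and finds $R(e_1,e_2,e_1,e_2)=\tfrac{11}{18}$, hence $W^{qc}(e_1,e_2,e_1,e_2)=-\tfrac{5}{18}\neq0$.

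The one gap in your write-up is that you stop short of actually computing $R(e_1,e_2,e_1,e_2)$ and instead assert that ``the resulting value is different from $\tfrac13$''. Since that single number is the entire content of the proof, it should not be left as a claim; you have correctly described the method (Koszul formula for $\nabla^g$, then correct by the torsion via \eqref{lcbi}), and carrying it out yields $\tfrac{11}{18}$.
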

\begin{proof}
Using \eqref{qcwdef1}, \eqref{Kulkarni-Nomizu} and \eqref{tor-endom-2} we have that
the expression of $W^{qc}(e_1,e_2,e_1,e_2)$ becomes as \eqref{wqc-1}.
From \eqref{lcbi}, \eqref{Kulkarni-Nomizu}, \eqref{ex2}, \eqref{qc2} and
\eqref{tor-endom-2} we obtain
$$
\begin{aligned}
 R(e_1,e_2,e_1,e_2)=\frac{11}{18}, \quad (g\owedge g)(e_1,e_2,e_1,e_2)=2, \qquad
 \\
 \sum_{r=1}^{3}((\omega_{r}\owedge \omega_{r})(e_1,e_2,e_1,e_2)+4\omega_{r}(e_1,e_2)\omega_{r}(e_1,e_2))=6.
 \end{aligned}
$$
\noindent Therefore,
$W^{qc}(e_1,e_2,e_1,e_2)=-\frac{5}{18}\not=0$. The result follows from Theorem \ref{main1}.
\end{proof}

\small\noindent Dipartimento di Matematica e Applicazioni, Universit\`a di Milano Bicocca,  Via Cozzi 53, 20125 Milano, Italy.\\
\texttt{diego.conti@unimib.it}

\smallskip
\small\noindent Universidad del Pa\'{\i}s Vasco, Facultad de Ciencia y Tecnolog\'{\i}a, Departamento de Matem\'aticas,
Apartado 644, 48080 Bilbao, Spain. \\
\texttt{marisa.fernandez@ehu.es}\\
\texttt{joseba.santisteban@ehu.es}


\begin{thebibliography}{AHit}


\bibitem{Alek} D.~V.~Alekseevski{\u\i}, B.~N.~Kimel$'$fel$'$d,
Structure of homogeneous Riemannian spaces with zero Ricci curvature. (Russian)
Funkcional. Anal. i Prilo\v Zen. {\bf 9}(2) (1975), 5-11 (English translation: Functional Anal. Appl. 9 (1975), 97-102).

\bibitem{Alt}
J.~Alt,
On the twistor space of a quaternionic contact manifold,
{\em J. Geom. Phys.\/} {\bf 61} (2011), 1783-1788.

\bibitem{Biq1}
O.~Biquard,
Quaternionic contact structures,
{\em Quaternionic structures in mathematics and physics} (Rome, 1999),
23--30 (electronic), Univ. Studi Roma "La Sapienza", Roma, 1999.

\bibitem{Biq2}
O.~Biquard,
{\em M\'{e}triques d'Einstein asymptotiquement sym\'{e}triques},
Ast\'{e}risque {\bf 265} (2000).

\bibitem{BGN}
Ch.~Boyer, K.~Galicki, B.~Mann,
The geometry and topology of $3$-Sasakian manifolds,
{\em J. Reine Angew. Math.\/} {\bf 455} (1994), 183--220.

\bibitem{AFISV}
L.~C. de Andr\'es, M.~Fern\'andez, S.~Ivanov, J.A.~Santisteban, L.~Ugarte,
D.~Vassilev,
Quaternionic K\"ahler and Spin(7) metrics arising from quaternionic
contact Einstein structures.
Preprint 2010,  arXiv:1009.2745.

\bibitem{DIM}
J.~Davidov, S.~Ivanov, I.~Minchev,
The twistor space of a quaternionic contact manifold.
To appear in {\em Quart. J. Math.\/} Preprint 2010,  arXiv:1010.4994.

\bibitem{D}
D.~Duchemin,
Quaternionic contact structures in dimension 7,
{\em Ann. Inst. Fourier} {\bf 56} (4) (2006), 851--885.

\bibitem{D1}
D.~Duchemin,
Quaternionic contact hypersurfaces.
Preprint 2010,  arXiv:math/0604147.


\bibitem{IMV1}
S.~Ivanov, I.~Minchev, D.~Vassilev,
Extremals for the Sobolev inequality on the seven dimensional quaternionic
Heisenberg group and the quaternionic contact Yamabe problem,
{\em J. Eur. Math. Soc.\/} {\bf 12} (4) (2010), 1041--1067.

\bibitem{IMV}
S.~Ivanov, I.~Minchev, D.~Vassilev,
Quaternionic contact Einstein structures and the quaternionic
contact Yamabe problem. Preprint 2010, arXiv:math/0611658.


\bibitem{IV}
S.~Ivanov, D.~Vassilev,
Conformal quaternionic contact curvature and the local sphere theorem,
{\em J. Math. Pures Appl.\/} {\bf 93} (2010), 277--307.

\bibitem{IV1}
S.~Ivanov, D.~Vassilev,
Quaternionic contact manifolds with a closed fundamental 4-form,
{\em Bull. London Math. Soc.\/} {\bf 42} (2010), 1021--1030.

\bibitem{IV2}
S.~Ivanov, D.~Vassilev,
{\em Extremals for the Sovolev inequality and the
quaternionic contact Yamabe problem}, World Scientific Publish., 2011.


\bibitem{Wei}
W.~Wang,
The Yamabe problem on quaternionic contact manifolds,
{\em Ann. Mat. Pura Appl.\/} {\bf 186} (2) (2007), 359--380.

\end{thebibliography}
\end{document}